\newcommand{\bel}[1]{\begin{equation}\label{#1}}
\newcommand{\be}{\begin{equation}}
\newcommand{\qe}{\end{equation}}
\newcommand{\R}{{\mathbb R}}
\newcommand{\N}{{\mathbb N}}
\newcommand{\Z}{{\mathbb Z}}
\newcommand{\C}{{\mathbb C}}
\newcommand{\Hmm}[1]{\leavevmode{\marginpar{\tiny%
$\hbox to 0mm{\hspace*{-0.5mm}$\leftarrow$\hss}%
\vcenter{\vrule depth 0.1mm height 0.1mm width \the\marginparwidth}%
\hbox to
0mm{\hss$\rightarrow$\hspace*{-0.5mm}}$\\\relax\raggedright #1}}}
\newtheorem{theorem}{Theorem}[section]
\newtheorem{lemma}[theorem]{Lemma}
\newtheorem{definition}[theorem]{Definition}
\newtheorem{remark}[theorem]{Remark}
\begin{document}

\title{The fourth-order Schr\"{o}dinger equation on lattices}

\author{Jiawei Cheng}
\address{Jiawei Cheng: School of Mathematical Sciences, Fudan University, Shanghai 200433, China}
\email{\href{mailto:chengjw21@m.fudan.edu.cn}{chengjw21@m.fudan.edu.cn}}


\begin{abstract}
In this paper, we study the fourth-order Schr\"{o}dinger equation
\begin{equation*}
    i \partial_t u + {\Delta}^2 u - \gamma \Delta u = \pm |u|^{s-1}u
\end{equation*}
on the lattice $\Z^d$ with dimensions $d=1,2$ and parameter $\gamma \in \R$. In order to establish sharp dispersive estimates, we consider the fundamental solution as an oscillatory integral and analyze the Newton polyhedron of its phase function. Furthermore, we prove Strichartz estimates which yield the existence of global solutions to nonlinear equations with small data.
\end{abstract}

\maketitle

\section{Introduction}\label{sec-intro}

Fourth-order Schr\"{o}dinger equations on Euclidean spaces read as
\begin{equation}\label{equ-continuous fourth-order}
    i\partial_t u + \Delta^2 u + \varepsilon \Delta u  = F(u),
\end{equation}
where $u:\R \times \R^d \rightarrow \C$ is a complex-valued function and $\varepsilon \in \{-1,0,1\}$. With power type nonlinearity, they have been introduced by V. I. Karpman \cite{K96} and A. G. Shagalov \cite{KS00} to take into account the role of small fourth-order dispersion terms in the propagation of intense laser beams in a bulk medium with Kerr nonlinearity. Since then, the study of \eqref{equ-continuous fourth-order}, including the Strichartz estimates, global well-posedness and scattering in different dimensions, has lasted for years. See for instance B. Pausader \cite{P07,P09}, C. Miao et al. \cite{MXZ09,MXZ11}, C. Hao et al. \cite{HHW06}, B. Guo and B. Wang \cite{GW02}.

Discrete analogs of partial differential equations on graphs have been extensively studied in recent years. See \cite{B17,Gri18} for elliptic and parabolic equations and \cite{FT04,LX19,LX22} for wave equations.

The standard $d$-dimensional integer lattice, denoted by $\Z^d = \{(x_1,\cdots,x_d) \in \R^d: x_j \in \Z, j=1,\cdots,d\}$, is an important example of graphs. On $\Z^d$, the discrete Laplacian is defined as, for any function $u:\Z^d \rightarrow \C$,
\begin{equation*}
    \Delta u (x,t) := \sum_{y\in\Z^d ,\, d(x,y) = 1} \left( u(y,t) - u(x,t)\right),
\end{equation*}
with $d(x,y) = \sum_{k=1}^d |x_k-y_k|$.

In this article, we consider the discrete fourth-order Schr\"{o}dinger equation (DFS, in short) with Cauchy data on $\mathbb{Z}^d$
\begin{equation}\label{equ-origin fourthorder equation}
    \left\{
    \begin{aligned}
        & i \partial_t u(x,t) + {\Delta}^2 u(x,t) - \gamma \Delta u(x,t) = F(u(x,t)), \\
        & u(x,0) = f(x),
    \end{aligned}
    \right.
\end{equation}
where $x \in \mathbb{Z}^d$, $t\in \R$ and the parameter $\gamma \in \R$. For the linear model, i.e. $F = 0$ in \eqref{equ-origin fourthorder equation}, we restrict attention to solutions with adequate decay, i.e. Schwartz class, see Section \ref{ssec-basics on Z^d}. Therefore, we always assume the initial data $f$ is rapidly decreasing, so that the discrete Fourier transform can be applied to get the solution in the same function space. In what follows, we pursue sharp decay estimates for such solutions.

Obtaining dispersive inequalities for linear dispersive equations is a classical problem, which amounts to establishing a decay estimate for the $l^{\infty}$ norm of the solution in terms of time and the $l^1$ norm of the initial data. The key step is to obtain pointwise estimates to the corresponding fundamental solution.

We first recall some dispersive estimates on Euclidean space $\R^d$. It is well-known that the decay rate of the solution to Schr\"{o}dinger equation is of the order $|t|^{-\frac{d}{2}}$. While for wave equation, it decays like $|t|^{-\frac{d-1}{2}}$, see e.g. \cite{GV95,KT98}. M. Ben-Artzi et al. \cite{BKS00} proved sharp space-time decay properties for the biharmonic Schr\"{o}dinger operator. Roughly speaking, its fundamental solution can be bounded by $|t|^{-\frac{d}{4}}$. As for equations on $\Z^d$, results on decay estimates have also been obtained for low dimensions. See \cite{SK05} ($d \in \N$) for Schr\"{o}dinger equation; \cite{S98} ($d=2,3$), \cite{BCH23} ($d=4$) for wave equation; \cite{SK05} ($d=1$), \cite{BG17} ($d=2$), \cite{CI21} ($d=2,3,4$) for Klein-Gordon equation and \cite{CA23} ($d=2$) for fractional Schr\"{o}dinger equation. 

Among these models, only the fundamental solution of the discrete Schr\"{o}dinger equation has the separation-of-variables form. Therefore, the problem can be reduced to the case $d=1$. Unfortunately, we shall see that this special property fails to hold for the DFS. Hence we have to consider each dimension one-by-one.

We get the fundamental solution of \eqref{equ-origin fourthorder equation} by the discrete Fourier transform (see Section \ref{ssec-basics on Z^d}):
\begin{equation}\label{equ-fundamental solution}
    G(x,t) = \frac{1}{(2\pi)^d}\int_{\mathbb{T}^d}  e^{ix \cdot \xi+i(\omega^4+\gamma \omega^2)t} \,d\xi,\,\,\,\,\mbox{with}\,\, \omega(\xi) := \sqrt{\sum_{j=1}^{d} (2-2\cos \xi_j)}.
\end{equation}
Here $\mathbb{T}^d := [-\pi,\pi]^d$ is the torus and $x\cdot \xi = \sum_{j=1}^{d}x_j\xi_j$ is the usual inner product. Letting $x=vt$, we relate $G$ as an oscillatory integral,
\begin{equation}\label{equ-related integral}
    I(v,t) =\int_{\mathbb{T}^d} e^{i t (v\cdot \xi + \omega^4+\gamma \omega^2)}\, d\xi,
\end{equation}
with parameters $v \in \R^d,\gamma \in \R$. Throughout this paper, we let
\begin{equation*}
    \phi(v,\xi):= v\cdot \xi + \omega^4+\gamma \omega^2
\end{equation*}
be the phase function. For fixed $\gamma$, our aim is to look for optimal value $\alpha$ such that
\begin{equation}\label{equ-aim estimate}
    |I(v,t)| \leq C(1+|t|)^{\alpha},
\end{equation}
where $C$ is independent of $v$. 

The asymptotic behaviour of an oscillatory integral is closely connected to the critical points of its phase function. Hence, we first look for critical points of $\phi(v,\cdot)$, which suggests a partition of $v$. 
In Section \ref{sec-proof} we will see no critical points appear for $v$ away from the origin, then $I$ decays faster than $|t|^{-N}$ for any $N \in \mathbb{N}$ by integrating by parts. More importantly, we will show the constant in the inequality can be chosen uniformly in $v$, cf. Lemma \ref{lem-outside decay any faster}.

The situation becomes complicated for $v$ with small length. In order to get the uniform estimate, we view the problem as the stability of oscillatory integral under phase permutations. To make it clear, one observes that
\begin{equation*}
    \phi(v,\xi)=(v-v_0)\cdot \xi+\phi(v_0,\xi).
\end{equation*}
Therefore, if we get the decay rate of \eqref{equ-aim estimate} with $v=v_0$ and prove that this estimate holds uniformly for $I(v,t)$ with $v$ in a small neighborhood of $v_0$, then we get the global estimate by a finite covering on the velocity space. Near the nondegenerate critical points, $I$ decays as $|t|^{-\frac{d}{2}}$. However, some critical points are degenerate in this case, retarding further the decay rate. By degeneracy we mean the rank of Hess$_{\xi}\phi(v,\xi)$, i.e. the Hessian matrix of $\phi(v,\cdot)$ at $\xi$, is less than $d$. For such points, Newton polyhedra will play important roles in obtaining the desired decay. 

To the best of the author's knowledge, there are no well-known results on the dispersive estimates for the DFS in any dimension. In this article, we prove sharp decay estimates for $G$ in $d=1,2$, serving as the first step to the open question. Our main result is the following.
\begin{theorem}\label{theorem-main conclusion}
    There exists $C=C(d,\gamma)>0$ independent of $x$, such that 
    \begin{equation*}
        |G(x,t)| \leqslant C(1+|t|)^{-\beta} \log^{p}(2+|t|),\quad \forall\, (x,t)\in\Z^d\times\R.
    \end{equation*}
    \begin{itemize}
        \item [(a)] When $d=1$, then
        \begin{equation*}
            (\beta,p) =
            \begin{cases}
                (\frac{1}{4},0), & \text{if $~\gamma \in \{-8,0\}$}; \\
                (\frac{1}{3},0), & \text{if $~\gamma \in \R \backslash \{-8,0\}$}.
             \end{cases}
        \end{equation*}
        \item [(b)] When $d=2$, then
        \begin{equation*}
            (\beta,p) = 
            \begin{cases}
               (\frac{1}{2},1), & \text{if $~\gamma =-8$}; \\
               (\frac{1}{2},0), & \text{if $~\gamma \in \{-16,0\}$}; \\
               (\frac{3}{4},0), & \text{if $~\gamma \in \R \backslash \{-16,-8,0\}$}.
            \end{cases}
        \end{equation*}
    \end{itemize}
\end{theorem}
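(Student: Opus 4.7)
My plan is to reduce the theorem to proving $|I(v,t)| \le C(1+|t|)^{-\beta}\log^{p}(2+|t|)$ uniformly for $v \in \R^d$ via the substitution $x = vt$, with $I$ and $\phi$ as in \eqref{equ-related integral}. The critical point equation $\partial_{\xi_j}\phi = v_j + (4\omega^2 + 2\gamma)\sin\xi_j = 0$ determines a compact set $V \subset \R^d$ of velocities admitting stationary points; for $v$ outside a neighborhood of $V$, Lemma \ref{lem-outside decay any faster} already yields faster-than-polynomial decay, so the remaining work concentrates near $V$. Writing $\phi(v,\xi) = (v-v_0)\cdot\xi + \phi(v_0,\xi)$, I would cover $V$ by finitely many small balls around reference velocities $v_0$ and analyze $v$ inside each ball as a linear perturbation of $\phi(v_0,\cdot)$.

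Around each $v_0$ I would use a smooth partition of unity on $\mathbb{T}^d$ to isolate the critical points $\xi_0$ of $\phi(v_0,\cdot)$; integration by parts handles the complementary region uniformly for $v$ near $v_0$. Near $\xi_0$, setting $\eta = \xi - \xi_0$ and $\psi(\eta) = \phi(v_0,\xi_0+\eta) - \phi(v_0,\xi_0)$, I would study $\int e^{it(\psi(\eta) + (v-v_0)\cdot\eta)} \chi(\eta)\,d\eta$ through the Newton polyhedron of $\psi$: nondegenerate stationary phase gives $|t|^{-d/2}$, while degenerate cases call for Varchenko-type estimates and their Karpushkin-style stable refinements that absorb the linear perturbation.

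The special values of $\gamma$ are pinpointed by direct Taylor expansion. In $d=1$ at $v_0=0$, one has $\psi(\eta) \sim \gamma\eta^2 + \eta^4$ near $\xi_0 = 0$ and $\psi(\eta) \sim -(8+\gamma)\eta^2 + \eta^4$ near $\xi_0 = \pi$, so the quadratic part vanishes precisely when $\gamma\in\{0,-8\}$, producing quartic phases and the $|t|^{-1/4}$ rate; for other $\gamma$ the corners contribute $|t|^{-1/2}$, and the dominant rate $|t|^{-1/3}$ comes from interior critical points on $\partial V$ where $\partial_\xi^2\phi$ also vanishes (Airy type). In $d=2$ the corners $(0,0),(\pi,\pi)$ become isotropically quartic at $\gamma = 0,-16$ respectively with $\psi \sim c(\eta_1^2+\eta_2^2)^2$, a quartic that vanishes only at the origin and yields $|t|^{-1/2}$ without log. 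At $\gamma = -8$ the corners $(\pi,0),(0,\pi)$ become fully degenerate with leading term $(\eta_1^2-\eta_2^2)^2 = (\eta_1-\eta_2)^2(\eta_1+\eta_2)^2$. For generic $\gamma \notin\{0,-8,-16\}$, the dominant rate $|t|^{-3/4}$ comes from cusp points on $\partial V$ whose phase reduces to $\eta_1^2 + \eta_2^4$ in suitable coordinates, decaying as $|t|^{-1/2}\cdot|t|^{-1/4}$.

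The main obstacle is the $\gamma = -8$, $d=2$ case. Rotating by $(u,w) = (\eta_1-\eta_2, \eta_1+\eta_2)$ turns the principal part into $u^2w^2$, which vanishes along the two axes; Varchenko's $\mathbb{R}$-nondegeneracy hypothesis fails, and the log factor emerges from an iterated van der Corput estimate of the form $\int_{|t|^{-1/2}\lesssim |u|\lesssim 1} |t|^{-1/2}|u|^{-1}\,du \sim |t|^{-1/2}\log|t|$. Establishing that this bound is stable under the linear perturbation $(v-v_0)\cdot\eta$ — in particular controlling the integral near both axes $u=0$ and $w=0$ as $v$ varies — is the main technical hurdle, which I would address by a dyadic decomposition in $(u,w)$ together with a case-by-case treatment of one-dimensional oscillatory integrals.
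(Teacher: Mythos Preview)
Your overall architecture matches the paper's proof closely: reduce to the oscillatory integral $I(v,t)$, dispose of large $|v|$ by integration by parts (the paper's Lemma \ref{lem-outside decay any faster}), cover the compact velocity region by finitely many balls, localize in $\xi$ by a partition of unity, and at each degenerate critical point read off the decay from the Newton polyhedron together with Karpushkin-type stability under the linear perturbation $(v-v_0)\cdot\xi$. Your identification of the exceptional values $\gamma\in\{0,-8,-16\}$ via the Taylor expansion at the corners of $\mathbb{T}^d$ is exactly what the paper does, and your claim that the generic $d=2$ worst case is a phase equivalent to $\eta_1^2+c\eta_2^4$ is correct after one more change of variables (the paper first obtains $u_1^2+c\,u_1u_2^2$ at $\xi_0=(\tfrac{\pi}{2},\tfrac{\pi}{2})$, Theorem \ref{theorem-xi = pi/2}, which has the same Newton distance $4/3$).

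The one place you diverge substantively from the paper is the $\gamma=-8$ case in $d=2$. You propose a hands-on argument: dyadic decomposition in the rotated coordinates $(u,w)$ around the axes of $u^2w^2$, one-dimensional van der Corput in each shell, and then summation producing the $\log|t|$. The paper avoids this entirely by staying inside the two-variable Newton polyhedron machinery: in the adapted coordinates $y_1=\xi_1+\xi_2$, $y_2=\xi_1-\xi_2$ the principal part is $y_1^2y_2^2$, so the principal face is the vertex $(2,2)$, the height is $2$, and Varchenko's exponent $\nu(\phi)=1$ by Theorem \ref{theorem-judge nu}. Then Theorem \ref{theorem-2 dim newton polyhedra} gives the $(1+|t|)^{-1/2}\log(2+|t|)$ bound, and crucially Theorem \ref{theorem-2dim NP uniform} (Karpushkin) delivers the stability under analytic perturbations --- including your linear term $(v-v_0)\cdot\xi$ --- for free. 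So the ``main technical hurdle'' you flag is already absorbed in the cited theory; you do not need a bespoke dyadic argument. Your approach would work, but it reproves a special case of Karpushkin's theorem and is considerably more laborious. A secondary point: the paper systematically enumerates \emph{all} degenerate critical points (Lemma \ref{lem-characterization of degenerate}), including the curves $E_\gamma$ and $D_\gamma$, and checks that each contributes decay no slower than the dominant one (Theorems \ref{theorem-xi in E gamma}, \ref{theorem-xi in D gamma} give $-5/6$); your sketch implicitly assumes the ``cusp'' points are the worst, which is true but requires this verification.
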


\begin{remark}
    For $d=2$, the estimate is sharp (see Theorem \ref{theorem-2 dim NP sharp}) in the sense that there exist $v_0 \in \R^2$ and a constant $A\neq 0$ such that 
    \begin{equation*}
            \lim_{t\rightarrow \pm \infty} \frac{t^{\beta}}{\log^{p}t} I(v_0,t) = A.
    \end{equation*}
    As for $d=1$, the estimate is also sharp as far as van der Corput lemma, cf. Theorem \ref{lemma-van der corput}, is concerned.
\end{remark}

We point out that essential difficulties appear for the DFS due to the absence of the separation-of-variables property. As the dimension increases, it is extremely difficult to determine the leading term of an oscillatory integral in most cases, hindering the extension of Theorem \ref{theorem-main conclusion} to all dimentions.

As we will see in Section \ref{ssec-1 dim OI}, van der Corput lemma is an important tool in unidimensional uniform estimates. For integrals with two variables, cf. Section \ref{ssec-2dim OI}, there exists a complete theory based on Newton polyhedra and adapted coordinates. To make the best use of them, a key strategy is to choose proper change of coordinates under which $\phi$ is expressed in an elegant way. The choice is always motivated by the corresponding form of Hessian matrix, mainly to make sure that the quadratic terms are as simple as possible. 

On the other hand, note that the parameter $\varepsilon$ in \eqref{equ-continuous fourth-order} is essentially given by $\{-1,0,1\}$ in the continuous setting. However, since the scaling argument fails to apply for \eqref{equ-origin fourthorder equation}, the parameter $\gamma$ could take values in $\R$, leading to a more complex calculation.  

Based on the dispersive inequalities for free equations, we prove Strichartz type estimates for the solutions to inhomogeneous equations. We will call $(q,r)$ a Strichartz pair if $q,r \geq 2$ and
\begin{equation}\label{equ-Strichartz pair}
    \left\{
        \begin{aligned}
            \frac{1}{q} < \frac{1}{2}\left(\frac{1}{2}-\frac{1}{r}\right) &~~~~ \mathrm{if}~~ \gamma=-8, \\
            \frac{1}{q} \leq \frac{1}{2}\left(\frac{1}{2}-\frac{1}{r}\right) &~~~~ \mathrm{if}~~ \gamma=0 ~\mathrm{or}-16, \\
            \frac{1}{q} \leq \frac{3}{4}\left(\frac{1}{2}-\frac{1}{r}\right) &~~~~ \mathrm{otherwise}. \\
        \end{aligned}
    \right.
\end{equation}

\begin{theorem}\label{theorem-Strichartz estimate}
    Suppose that $d=2$ and $u$ is a solution to \eqref{equ-origin fourthorder equation}. Let $(q,r)$ and $(\overline{q},\overline{r})$ are both Strichartz pairs. Then $u$ satisfies the estimate
    \begin{equation*}
        ||u||_{L^q_t l^r} \leq C_{q,r,\overline{q},\overline{r}} \left(||f||_{l^2} + ||F||_{L^{\overline{q}'}l^{\overline{r}'}}\right).
    \end{equation*} 
\end{theorem}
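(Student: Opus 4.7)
The proof follows the abstract Strichartz framework of Keel--Tao, combined with the sharp dispersive decay supplied by Theorem \ref{theorem-main conclusion}. Writing $U(t) = e^{it(\Delta^2-\gamma\Delta)}$ for the unitary free propagator associated with the linear part of \eqref{equ-origin fourthorder equation} and applying the Duhamel formula
\begin{equation*}
    u(t) = U(t) f - i\int_{0}^{t} U(t-s) F(u(s))\,ds,
\end{equation*}
Theorem \ref{theorem-Strichartz estimate} reduces to the homogeneous estimate $\|U(t)f\|_{L^q_t l^r} \lesssim \|f\|_{l^2}$ together with the retarded inhomogeneous estimate, the latter being extracted from its dual and the Christ--Kiselev lemma. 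The lemma applies throughout, because \eqref{equ-Strichartz pair} forces $q \geq 8/3 > 2$ in every regime, so that $1/q + 1/\overline{q} < 1$ always holds and we remain off the Keel--Tao endpoint.

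The homogeneous estimate rests on two master bounds for $U(t)$ on $\Z^2$. First, $l^2$-energy conservation $\|U(t)f\|_{l^2} = \|f\|_{l^2}$ is immediate from Plancherel and the unimodularity of the Fourier symbol $e^{it(\omega^4+\gamma\omega^2)}$. Second, the dispersive estimate $\|U(t)f\|_{l^\infty} \leq C(1+|t|)^{-\beta}\log^p(2+|t|)\|f\|_{l^1}$ follows from writing $U(t)f = G(\cdot,t)*f$, applying Young's inequality, and invoking Theorem \ref{theorem-main conclusion}(b) with the appropriate $(\beta,p)$ depending on $\gamma$. Riesz--Thorin interpolation between these two endpoints yields, for each $r\in[2,\infty]$,
\begin{equation*}
    \|U(t)f\|_{l^r} \leq C(1+|t|)^{-2\beta(1/2-1/r)}\log^{2p(1/2-1/r)}(2+|t|)\,\|f\|_{l^{r'}}.
\end{equation*}

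The homogeneous Strichartz estimate for a pair $(q,r)$ is equivalent, via the $TT^{*}$ identity, to the bound
\begin{equation*}
    \left\| \int_{\R} U(t-s) g(s)\,ds \right\|_{L^q_t l^r} \lesssim \|g\|_{L^{q'}_t l^{r'}},
\end{equation*}
which, by Minkowski's inequality and the interpolated dispersive decay, reduces to the one-dimensional Hardy--Littlewood--Sobolev inequality with kernel $K(t) = (1+|t|)^{-2\beta(1/2-1/r)}\log^{2p(1/2-1/r)}(2+|t|)$. For the two power-type regimes $\gamma\in\{-16,0\}$ (giving $\beta=\tfrac12$) and $\gamma\notin\{-16,-8,0\}$ (giving $\beta=\tfrac34$), with $p=0$, the kernel lies in $L^{1/\sigma,\infty}(\R)$ with $\sigma = 2\beta(1/2-1/r)$, and weak-type HLS closes the estimate exactly when $\tfrac1q \leq \beta(\tfrac12-\tfrac1r)$, recovering the two non-logarithmic lines of \eqref{equ-Strichartz pair}. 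Subcritical pairs on the interior of the admissible region are then covered by interpolation against the trivial bound $\|U(t)f\|_{L^\infty_t l^2} \leq \|f\|_{l^2}$.

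The only delicate point is the logarithmic case $\gamma=-8$, where the kernel $(1+|t|)^{-1/2}\log(2+|t|)$ fails to lie in the weak-type space $L^{2,\infty}$ needed for the sharp HLS step. This is precisely why \eqref{equ-Strichartz pair} demands the strict inequality $\tfrac1q < \tfrac12(\tfrac12-\tfrac1r)$ when $\gamma=-8$: choosing $\varepsilon>0$ so small that $\tfrac1q \leq (\tfrac12-\varepsilon)(\tfrac12-\tfrac1r)$, the elementary bound $(1+|t|)^{-1/2}\log(2+|t|) \lesssim_{\varepsilon} (1+|t|)^{-1/2+\varepsilon}$ absorbs the logarithm and reduces the estimate to the power-type case already treated. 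This logarithmic absorption is the only genuine obstacle; once it is handled, the remainder of the argument is the textbook $TT^{*}$/Christ--Kiselev scheme.
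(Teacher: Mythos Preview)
Your proof is correct and follows essentially the same strategy as the paper: Duhamel's formula plus the abstract Keel--Tao machinery, fed by the $l^2$ energy conservation (via Plancherel) and the $l^1\to l^\infty$ dispersive decay from Theorem~\ref{theorem-main conclusion}. The paper simply invokes Lemma~\ref{lemma-tao's result} (Keel--Tao) as a black box, which already contains the homogeneous and retarded inhomogeneous estimates, whereas you unpack that black box by hand via $TT^*$, Hardy--Littlewood--Sobolev, and Christ--Kiselev; both routes are standard and equivalent here since $q>2$ keeps you away from the endpoint. One point where your write-up is actually more complete: the paper only treats $\gamma=0$ explicitly and dismisses the other cases as ``similar,'' while you spell out why the strict inequality in \eqref{equ-Strichartz pair} for $\gamma=-8$ is needed---namely, to absorb the logarithmic loss $(1+|t|)^{-1/2}\log(2+|t|)\lesssim_\varepsilon (1+|t|)^{-1/2+\varepsilon}$ before applying the power-type argument.
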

The proof relies on a well-known result in \cite{KT98}, see Lemma \ref{lemma-tao's result}. We also use the notion of Strichartz norm
\begin{equation*}
    ||h||_{\mathcal{S}}:= \sup_{(q,r)} ||h||_{L^{q}_{t}l^r},
\end{equation*}
where the supremum is taken over all Strichartz pairs. It is interesting to observe that the supremum in Strichartz norm can always be reached by two certain space-time norms, see Remark \ref{remark-strichartz norm}.

These estimates can be used in conjunction with a contraction mapping argument to prove global well-posedness for certain nonlinear equations with small initial data, see Theorem \ref{theorem-nonlinear equation}. 

This paper is organized as follows. In Section \ref{sec-preliminary} we give basic concepts and results, including the setting of function spaces on $\Z^d$, oscillatory integrals with one or two variables and Newton polyhedra. Then we prove Theorem \ref{theorem-main conclusion} in Section \ref{sec-proof} with a careful analysis to the integral $I(v,t)$. Finally, we utilize the decay bounds to obtain Strichartz estimates and the PDE applications in Section \ref{sec-Strichartz estimates}.

\section{Preliminaries}\label{sec-preliminary}

\subsection{Basics on the discrete setting and the DFS}\label{ssec-basics on Z^d}

Recall that $\mathbb{Z}^d$ denotes the standard integer lattice graph in $\mathbb{R}^d$. For $p \in [1,\infty]$, $l^p(\Z^d)$ is the $l^p$-space of functions on $\Z^d$ with respect to the counting measure, which is a Banach space with the norm 
\begin{equation*}
    ||h||_{l^p} :=\left\{
    \begin{aligned}
        &\left(\sum_{x \in \Z^d} |h(x)|^p\right)^{\frac{1}{p}},\ p\in[1,\infty);\\
        &\sup_{x\in\Z^d} |h(x)|,\ p=\infty.
    \end{aligned}\right.
\end{equation*}

We shall also use $|h|_p$ to denote the $l^p$ norm of $h$ for notational convenience. The mixed space-time Lebesgue spaces $L^q_t l^r$ are Banach spaces endowed with the norms
\begin{equation*}
    ||F||_{L^q_t l^r} := \left( \int_{\R}\left(\sum_{x\in \Z^d}|F(x,t)|^r\right)^{\frac{q}{r}}\right)^{\frac{1}{q}}
\end{equation*}
for $1\leqslant q,r < \infty$, with natural modifications for $q=\infty$ or $r=\infty$. Moreover, for proper functions $h_1,h_2$ on $\mathbb{Z}^d$ we define the convolution product as
\begin{equation*}
    h_1*h_2(x) := \sum_{y \in \mathbb{Z}^d} h_1(x-y)h_2(y),\quad\forall\, x\in\Z^d.
\end{equation*}

The $l^p$ spaces are analogous to the $L^p$ spaces of functions defined on $\R^d$. Many results in $L^p$ spaces extend to the lattice such as the H\"{o}lder inequality and Young's inequality for convolution. One major difference is that the $l^p$ spaces are nested: 
$ l^p \subset l^q$,  $\forall\,1 \leqslant p \leqslant q \leqslant \infty$.

In order to obtain the solution for linear equation, an important tool is the discrete Fourier transform. 
A function $h:\Z^d \rightarrow \C$ is said to be rapidly decreasing or in Schwartz class, if $\sup_{x \in \Z^d} |x|^k|h(x)| \leq C_k$ for $k = 0,1,2,\cdots$. For such function, we define its Fourier transform by
\begin{equation*}
    \mathcal{F}(h)(\xi)=\Hat{h}(\xi) := \sum_{x\in \mathbb{Z}^d} e^{-i\xi \cdot x}h(x),~~\xi \in \mathbb{T}^d.
\end{equation*}

The inverse transform for a smooth function $h: \mathbb{T}^d \rightarrow \C$ is defined as,
\begin{equation*}
    \mathcal{F}^{-1}(h)(x)=\Check{h}(x) := \frac{1}{(2\pi)^d} \int_{\mathbb{T}^d} e^{i \xi \cdot x} h(\xi) d\xi,~~x\in \mathbb{Z}^d.
\end{equation*}
We remark here that the Fourier transform and its inverse can be defined for more general functions. The convolution property and the Plancherel identity can be proved similarly. See e.g. \cite{W11} for more facts about discrete Fourier analysis. Applying the Fourier transform to both sides of (\ref{equ-origin fourthorder equation}), we get 
\begin{equation*}
    \left\{
        \begin{aligned}
            & \partial_t \Hat{u}(\xi,t) = i(\omega^4+\gamma \omega^2) \Hat{u}(\xi,t),\\
            & \Hat{u}(\xi,0) = \Hat{f}(\xi).
        \end{aligned}
    \right.
\end{equation*}
For any fixed $\xi$, the solution to this ordinary differential equation is 
\begin{equation*}
    \Hat{u}(\xi,t) = e^{i(\omega^4+\gamma \omega^2)t} \hat{f}(\xi).
\end{equation*}
Therefore we have
\begin{equation}\label{equ-semigroup solution}
    u(x,t)=\frac{1}{(2\pi)^d} \int_{\mathbb{T}^d} e^{i \xi \cdot x+i(\omega^4+\gamma \omega^2)t} \hat{f}(\xi) d\xi.
\end{equation}

In what follows, we will also write the solution to the linear DFS as a semigroup form, i.e. $u(x,t)=e^{itL}f(x)$, coinciding with \eqref{equ-semigroup solution}.

Moreover, the solution $u$ is given by $u=e^{itL}f=G*f$ with the Green's function $G(x,t)$ defined in (\ref{equ-fundamental solution}). We point out that $G$ is related to the oscillatory integral (\ref{equ-related integral}). For $x=vt$ and $t \neq 0$,  
\begin{equation*}
    G(x,t) = \frac{1}{(2\pi)^d} I(x/t,t).
\end{equation*}

\subsection{Uniform estimates for oscillatory integrals}\label{ssec-general OI}

The following useful notions are initiated from \cite{K83}. In the sequel, let $B_{\R^d}(\xi,r)$ (resp. $B_{\C^d}(\xi,r)$) be the usual open ball in $\R^d$ (resp. $\C^d$) with center $\xi$ and radius $r$, while $\overline{B}_{\R^d}(\xi,r)$ (resp. $\overline{B}_{\C^d}(\xi,r)$) denotes its closure.

\begin{definition}
    For $r,s>0$, we define the space $\mathcal{H}_r(s)$. A complex-valued function $P\in \mathcal{H}_r(s)$ means that $P$ is holomorphic on $B_{\C^d}(0,r)$, continuous on $\overline{B}_{\C^d}(0,r)$, and $|P(w)|<s$, $\forall \, w\in \overline{B}_{\C^d}(0,r)$.
\end{definition}
    
For any given $r,s>0$, one may verify that real polynomials belong to $\mathcal{H}_{r}(s)$ if their coefficients are sufficiently small.

\begin{definition}\label{def-uniform estimate}
    Suppose that $h:\mathbb{R}^d \rightarrow \mathbb{R}$ is real-analytic at 0 and $(\beta,p)\in\R\times\N$. We write 
    \begin{equation*}
        M(h) \curlyeqprec (\beta,p)
    \end{equation*} 
    if for any $r>0$ sufficiently small, we can find $\epsilon>0$, $C>0$ and a neighbourhood $A\subset B_{\R^d}(0,r)$ of the origin such that
    \begin{equation}\label{equ-uniform}
         \bigg| \int_{\mathbb{R}^d}e^{it(h(x)+P(x))}\psi(x) dx \bigg| \leqslant C (1+|t|)^{\beta} \log^p(2+|t|) \|\psi\|_{C^N(A)}
    \end{equation}
    holds for any $t \in \R$, $\psi \in C^{\infty}_0(A)$ and $P\in \mathcal{H}_r(\epsilon)$. Here $N \in \N$ only depends on $d$, $\|\psi\|_{C^N(A)}:=\sup\{|\partial^{\gamma}\psi(x)|:x\in A,\gamma\in\N^d,|\gamma|\leq N\}.$
\end{definition}

By $M(h,a) \curlyeqprec (\beta,p)$ we mean that $h_a(x):=h(x+a)$ has a uniform estimate at 0 with exponent pair $(\beta,p)$, i.e. $M(h_a) \curlyeqprec (\beta,p)$.

\subsection{Integrals in one variable}\label{ssec-1 dim OI}

Unidimensional uniform estimate can be treated by the following van der Corput lemma, see e.g. \cite[Section 8]{S93}.

\begin{theorem}\label{lemma-van der corput}
    Suppose that $h$ is a real-valued smooth function, and $\psi$ is complex-valued and smooth. Suppose also that $|h^{(k)}(x)| \geqslant 1$ for some $k \geqslant 2$ and all $x \in (a,b)$. Then we can conclude that
    \begin{equation*}
        \bigg| \int_{a}^{b} e^{i\lambda h(x)} \psi(x)dx \bigg| \leqslant c_k \lambda^{-1/k} \left[|\psi(b)|+ \int_a^b \psi'(x)dx\right].
    \end{equation*}
    The constant $c_k = 5 \cdot 2^{k-1}-2$ is independent of $h$, $\psi$ and $\lambda$. 
\end{theorem}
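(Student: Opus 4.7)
\medskip

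\textbf{Proof proposal.} The plan is to argue by induction on $k \geq 2$, with the shape of the bound designed so that the quantity $V(\psi):=|\psi(b)|+\int_a^b|\psi'(x)|\,dx$ (which controls $\sup_{[a,b]}|\psi|$) propagates cleanly through both the trivial estimate on a small exceptional interval and the inductive estimate on the complement.

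For the base case $k=2$, continuity of $h''$ together with $|h''|\geq 1$ forces $h''$ to have constant sign, so $h'$ is strictly monotone and vanishes at most once, say at $x_0\in[a,b]$ (take $x_0$ to be an endpoint if no zero exists). Choose $\delta=\lambda^{-1/2}$ and set $J=(x_0-\delta,x_0+\delta)\cap(a,b)$. On $J$ estimate trivially by $2\delta\sup|\psi|\leq 2\delta\,V(\psi)$. On each of the (at most two) components of $(a,b)\setminus J$ we have $|h'|\geq\delta$, and we integrate by parts using $e^{i\lambda h}=\frac{1}{i\lambda h'}\tfrac{d}{dx}e^{i\lambda h}$. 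The boundary terms contribute $\leq \tfrac{2}{\lambda\delta}\sup|\psi|$, the $\psi'$ term contributes $\leq\tfrac{1}{\lambda\delta}\int|\psi'|$, and the error term involving $h''/(h')^{2}$ is controlled via the identity $\int h''/(h')^2\,dx=[-1/h']$ together with the monotonicity of $1/h'$, which yields the bound $\tfrac{2}{\lambda\delta}\sup|\psi|$ without any loss. Summing the contributions with $\delta=\lambda^{-1/2}$ produces an inequality of the form $c_2\lambda^{-1/2}V(\psi)$, and the careful accounting is arranged to give $c_2=5\cdot 2-2=8$.

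For the inductive step, assume the statement holds for $k-1$ with constant $c_{k-1}$. Given $|h^{(k)}|\geq 1$, the function $h^{(k-1)}$ is monotone, so $\{|h^{(k-1)}|<\delta\}$ is an interval of length at most $2\delta$ about a single point $x_0$. Choose $\delta=\lambda^{-1/k}$, call this interval $J$, and estimate $\bigl|\int_J e^{i\lambda h}\psi\bigr|\leq 2\delta\,V(\psi)$. On each of the (at most two) components $I$ of $(a,b)\setminus J$ we have $|(h/\delta)^{(k-1)}|\geq 1$; applying the inductive hypothesis on $I$ with parameter $\lambda\delta$ in place of $\lambda$, one obtains
\begin{equation*}
    \Bigl|\int_I e^{i\lambda h(x)}\psi(x)\,dx\Bigr|\leq c_{k-1}(\lambda\delta)^{-1/(k-1)}V_I(\psi),
\end{equation*}
where $V_I(\psi)$ is the analogue of $V$ on $I$. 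With $\delta=\lambda^{-1/k}$ the factor $(\lambda\delta)^{-1/(k-1)}=\lambda^{-1/k}$ matches the trivial term $2\delta=2\lambda^{-1/k}$, and summing gives $(2c_{k-1}+2)\lambda^{-1/k}V(\psi)$. The recursion $c_k=2c_{k-1}+2$ with $c_2=8$ yields precisely $c_k=5\cdot 2^{k-1}-2$.

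The main obstacle is the bookkeeping in the base case: both the integration by parts and the $h''/(h')^{2}$ remainder must be bounded only in terms of $V(\psi)$ and the uniform lower bound $\delta$ on $|h'|$, with no hidden dependence on upper bounds for $|h''|$. This is where the monotonicity of $1/h'$ (a consequence of $h''$ having constant sign) is essential: it converts an a priori problematic integral of $|h''|/(h')^2$ into the telescoping quantity $[1/h']_{\partial I}$, which is under control. Once this is done, the inductive step is essentially a balancing computation, and the stated value of $c_k$ drops out of the recursion.
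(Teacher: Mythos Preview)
The paper does not prove this lemma; it quotes it from Stein's \emph{Harmonic Analysis} (``see e.g.\ \cite[Section~8]{S93}'') and then immediately applies it to the one-dimensional case of Theorem~\ref{theorem-main conclusion}. Your proposal reproduces exactly the standard argument from that reference: induction on $k$, at each step isolating a $\delta=\lambda^{-1/k}$ neighbourhood of the unique zero of the monotone function $h^{(k-1)}$, estimating trivially there, and invoking the $(k-1)$-step (after the rescaling $h\mapsto h/\delta$, $\lambda\mapsto\lambda\delta$) on each of the two complementary intervals. Your treatment of the base case---in particular the telescoping of $\int h''/(h')^{2}$ into $[-1/h']$ so that no upper bound on $|h''|$ is needed---is precisely Stein's device, and the recursion $c_k=2c_{k-1}+2$ with initial value $c_2=8$ (equivalently $c_1=3$) indeed gives $c_k=5\cdot2^{k-1}-2$. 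So your proof is correct and coincides with the source the paper defers to.

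One cosmetic remark: the displayed inequality in the paper has $\int_a^b\psi'(x)\,dx$ without absolute value, which is meaningless for complex $\psi$ and is evidently a typo for $\int_a^b|\psi'(x)|\,dx$; you have silently (and correctly) worked with the latter.
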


As an application, we prove Theorem \ref{theorem-main conclusion} when $d=1$.
\begin{proof}
    The integral we are handling is, c.f. \eqref{equ-related integral}, 
    \begin{equation*}
        I(v,t) = \int_{-\pi}^{\pi} e^{it[(2-2\cos u)(2+\gamma -2\cos u)+vu]}du.
    \end{equation*}
    Inspired by Theorem \ref{lemma-van der corput}, we should calculate the derivatives of $\phi$, that is,
    \begin{equation*}
        \begin{aligned}
            & \phi'(u) = 2\sin u(4+\gamma-4\cos u)+v, & \phi''(u) = -16\cos^2 u + 2(4+\gamma)\cos u + 8,\\
            & \phi^{(3)}(u) = 2\sin u (-4-\gamma +16\cos u), & \phi^{(4)}(u) = 64\cos^2 u - 2(4+\gamma)\cos u -32.
        \end{aligned}
    \end{equation*}
    Based on this, we claim that the phase function satisfies
    \begin{equation*}
        \max_{u \in [-\pi,\pi]} \{|\phi'(u)|,|\phi''(u)|,|\phi^{(3)}(u)|,|\phi^{(4)}(u)|\} \geq M >0.
    \end{equation*}
    To see this, note that for any $\gamma \in \R$, the quadratic equations (on $a:= \cos u \in [-1,1]$),
    \begin{equation*}
        -16a^2+2(4+\gamma)a+8=0 \,\,\mbox{and}\,\, 64a^2-2(4+\gamma)a-32=0,
    \end{equation*}
    do not have the same root. 
    
    This estimate is sharp for $\gamma =0$ since $|\phi'(0)|=|\phi''(0)|=|\phi^{(3)}(0)|=0$ when $v=0$. The case for $\gamma=-8$ is similar at $(u,v) = (\pi,0)$. As for the other $\gamma$, we have $\max \{|\phi'(u)|,|\phi''(u)|,|\phi^{(3)}(u)||\} \geq M' >0$. In fact, $\cos u = \pm 1$ or $\frac{4+\gamma}{16}$ is not a root of $\phi''(u)=0$.
\end{proof}

\subsection{Newton polyhedra}\label{ssec-Newton polyhedra}

We give a summary on the method of Newton polyhedra, which is pioneered in \cite{V76}. 

Without loss of generality, we can and will assume that $S:\R^d \rightarrow \R$ is an analytic function defined in a neighborhood of the origin with
\begin{equation*}
    S(0)=0\,\,\,\mbox{and}\,\,\,\nabla S(0)=0. 
\end{equation*}
$S$ can be expressed as a uniformly and absolutely convergent series, so we consider the associated Taylor expansion at the origin
\begin{equation*}
    S(x) = \sum_{m \in \mathbb{N}^d} s_{m} x^{m}.
\end{equation*}
The set
\begin{equation*}
    \mathcal{T}(S) := \{m \in \mathbb{N}^d :s_{m} \neq 0 \}
\end{equation*} 
is called the Taylor support of $S$, and we will always assume $\mathcal{T}(S) \neq \varnothing$. The Newton polyhedron of $S$, denoted by $\mathcal{N}(S)$, is the convex hull of the set
\begin{equation*}
    \bigcup_{m \in \mathcal{T}(S)} \left(m + \mathbb{R}^d_{+}\right),
\end{equation*}
where $\R^d_+=\{x\in \R^d:x_j\geq 0, \, 1 \leq j\leq d\}$. Given a compact face $\mathcal{P}$ of $\mathcal{N}(S)$, we write
\begin{equation*}
    S_{\mathcal{P}}(x) = \sum_{m \in \mathcal{P}}s_{m} x^{m}. 
\end{equation*}
Note that an edge or a vertex is also considered as a face. We say that $S$ is $\mathbb{R}$-nondegenerate if $\nabla S_{\mathcal{P}}(x)$ is nonvanishing on $(\mathbb{R}\backslash\{0\})^d$ for all compact faces $\mathcal{P}$ of $\mathcal{N}(S)$, that is,
\begin{equation*}
   \bigcup_{\mathcal{P}}\left( \bigcap_{j=1}^d \left\{x:{\partial_{j}S_{\mathcal{P}}}(x)=0\right\}\right)\subset\; \bigcup_{j=1}^d \{x:x_j=0\}.
\end{equation*}

The {Newton distance} $d_S$ of $\mathcal{N}(S)$ is defined by
\begin{equation*}
        d_S = \inf \{\rho>0:(\rho,\rho,\cdots,\rho) \in \mathcal{N}(S)\} ,
\end{equation*}
and the principal face $\pi(S)$ of $\mathcal{N}(S)$ is the face of minimal dimension containing the point 
$\boldsymbol{d_S}=(d_S,d_S,\cdots,d_S)$. We shall call the series
\begin{equation*}
    S_{pr}:=\sum_{m \in \pi(S)} s_m x^m
\end{equation*}
the principal part of $S$. If $\pi(S)$ is compact, $S_{pr}$ is a polynomial; otherwise, we consider $S_{pr}$ as a formal power series.

Following \cite{IKM05} and \cite{IM11-TAMS}, we also define the order of $S$ at a point $y$, denoted by ord$S(y)$, as the smallest integer $j$ such that $D^j S(y) \neq 0$, where $D^j S$ denotes the $j$th-order total derivative of $S$. If $S: \R^2 \rightarrow \R$, we let
\begin{equation*}
    m(S):= \max_{y \in \mathbb{S}^1} \mathrm{ord}S(y)
\end{equation*}
be the maximal order of $S$ along the unit circle in $\R^2$.

In his seminal work \cite{V76}, A. N. Varchenko proved that the leading term of the asymptotic expansion of a oscillatory integral can be read by the Newton polyhedron of its phase function under certain conditions. See also \cite[Theorem 2.3]{G18}.

\begin{theorem}\label{theorem-newton polyhedra}
    Suppose $S$ is $\mathbb{R}$-nondegenerate and $\psi: \mathbb{R}^d \rightarrow \mathbb{R}$ is smoothly supported close enough to the origin. Let $k_0\in\N$ be the greatest codimension over all faces of $\mathcal{N}(S)$ containing the point $\boldsymbol{d_S}$, i.e. $k_0=d-dim(\pi(S))$, then we have
    \begin{equation*}
       \left|\int_{\R^d}e^{itS(x)}\psi(x)\,dx\right|\leqslant C (1+|t|)^{-\frac{1}{d_S}} \log^{k_0-1}(2+|t|).
    \end{equation*}
\end{theorem}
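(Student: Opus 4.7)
The plan is to follow Varchenko's original strategy and resolve the singularity of $S$ at the origin by an anisotropic dyadic decomposition subordinate to $\mathcal{N}(S)$. First I would choose a smooth simplicial subdivision of $\R^{d}_{+}$ dual to $\mathcal{N}(S)$, so that each top-dimensional cone $C$ of the subdivision corresponds to a vertex of $\mathcal{N}(S)$ and lower-dimensional cones correspond to faces of higher dimension. After fixing a partition of unity $\{\psi_{C}\}$ on a small punctured neighborhood of the origin whose pieces are tubular around the rays of the fan, and selecting an interior integer weight $a=a(C)\in\Z^{d}_{>0}$ for each cone, it suffices to bound each piece
\[
    J_{C}(t) \;=\; \int_{\R^{d}} e^{itS(x)}\psi_{C}(x)\,dx.
\]
A direct convex-geometric computation yields the key fact $|a|/d_{a}\ge 1/d_{S}$, where $d_{a}:=\min_{m\in\mathcal{N}(S)}\langle a,m\rangle$, with equality precisely when the face of $\mathcal{N}(S)$ supported by $a$ contains the principal face $\pi(S)$; such weights form a $k_{0}$-dimensional subfan.

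Next I would perform an anisotropic dyadic decomposition on each cone: with $\chi$ supported in the annulus $\{1\le|y|\le 2\}$, write
\[
    \psi_{C}(x) \;=\; \sum_{k\ge k_{C}} \chi\!\bigl(2^{k a_{1}}x_{1},\ldots,2^{k a_{d}}x_{d}\bigr)\,\tilde{\psi}_{C}(x),
\]
and rescale $x_{j}=2^{-k a_{j}}y_{j}$. The face decomposition of $S$ with respect to $a$ gives
\[
    S\bigl(2^{-k a_{1}}y_{1},\ldots,2^{-k a_{d}}y_{d}\bigr) \;=\; 2^{-k d_{a}}\bigl(S_{\mathcal{P}}(y)+2^{-k}E_{k}(y)\bigr),
\]
where $\mathcal{P}$ is the face supported by $a$ and $E_{k}$ is uniformly $C^{N}$-bounded on the annulus. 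The Jacobian contributes $2^{-k|a|}$, turning the $k$-th piece into an oscillatory integral in $y$ with large parameter $\lambda_{k}:=|t|\,2^{-k d_{a}}$ and smooth phase $S_{\mathcal{P}}+O_{C^{N}}(2^{-k})$. The $\R$-nondegeneracy of $S$ forces $\nabla S_{\mathcal{P}}$ to be bounded below on the annular support, and for $k$ large the full gradient remains nonvanishing; repeated non-stationary integration by parts then yields the per-scale bound $2^{-k|a|}\min(1,\lambda_{k}^{-N})$ for every $N$.

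Summing over $k$ by splitting at $2^{k d_{a}}\sim|t|$ produces $J_{C}(t)\lesssim|t|^{-|a|/d_{a}}$. Cones with $|a|/d_{a}>1/d_{S}$ decay strictly faster than $|t|^{-1/d_{S}}$ and sum geometrically. The dominant piece comes from cones inside the $k_{0}$-dimensional subfan associated with $\pi(S)$: arranging the fan so that $k_{0}$ independent dyadic directions each saturate the threshold $|t|^{-1/d_{S}}$, and summing uniformly over a lattice region of diameter $\log_{2}|t|$ in each direction, produces the expected factor $\log^{k_{0}-1}(2+|t|)$. The main obstacle I foresee is the control of the perturbation $2^{-k}E_{k}(y)$: one needs a uniform compactness argument to guarantee that this small $C^{N}$-perturbation does not destroy the nonvanishing of $\nabla S_{\mathcal{P}}$ on the annulus uniformly in $k$, which is the real-analytic counterpart of the holomorphic stability encoded by $\mathcal{H}_{r}(s)$ in Definition \ref{def-uniform estimate}. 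A secondary difficulty is arranging the subfan bookkeeping carefully enough that the logarithmic exponent comes out exactly as $k_{0}-1$ rather than something larger.
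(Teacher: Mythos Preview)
The paper does not prove this theorem at all; it is quoted as a known result of Varchenko \cite{V76} (with a pointer also to \cite[Theorem 2.3]{G18}), so there is no in-paper argument to compare against.  Your outline is essentially the classical route, and the convex-geometric core --- the inequality $|a|/d_{a}\ge 1/d_{S}$ with equality exactly when the face supported by $a$ contains $\pi(S)$ --- is correct and is the heart of the matter.

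There is, however, a genuine gap in the non-stationary step.  You appeal to the $\R$-nondegeneracy of $S$ to bound $\nabla S_{\mathcal{P}}$ from below on the annulus $\{1\le|y|\le 2\}$, but the hypothesis only says $\nabla S_{\mathcal{P}}\neq 0$ on $(\R\setminus\{0\})^{d}$, i.e.\ away from all coordinate hyperplanes.  The Euclidean annulus meets every hyperplane $\{y_{j}=0\}$, and on those intersections $S_{\mathcal{P}}$ can certainly be critical (think of $S_{\mathcal{P}}(y)=y_{1}^{2}y_{2}^{2}$).  The standard repair is a \emph{multi-parameter} dyadic decomposition: sum over $k=(k_{1},\dots,k_{d})\in\N^{d}$ with $|x_{j}|\sim 2^{-k_{j}}$, so that after rescaling the support sits in a box $\prod_{j}\{1\le|y_{j}|\le 2\}$ disjoint from every hyperplane, where the nondegeneracy hypothesis bites.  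The fan then organizes the directions of $k$, and the log exponent $k_{0}-1$ falls out because the lattice points $k$ that saturate the threshold (those with $\min_{m\in\mathcal{N}(S)}\langle k,m\rangle\approx\log_{2}|t|$ realized on $\pi(S)$) fill a region of size $\sim(\log|t|)^{k_{0}-1}$.  In other words, the bookkeeping difficulty you flag as secondary is actually entangled with this primary gap; once you pass to the multi-parameter decomposition (or, equivalently, use a toric monomial change of variables as Varchenko does), both issues resolve together.
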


\subsection{Integrals in two variables}\label{ssec-2dim OI}

In this part we focus on integrals on $\R^2$. From the definitions above, we may find that the Newton distance depends on the chosen local coordinate system in which $S$ is expressed. Based on this, the height of $S$ is defined by 
\begin{equation*}
    ht_S :=\sup \{d_{S,x}\},
\end{equation*}
where the supremum is taken over all local analytic coordinate systems preserving 0 and $d_{S,x}$ is the Newton distance in coordinates $\{x\}$. A given coordinate system $\{x_0\}$ is said to be adapted to $S$ if $d_{S,x_{0}} = ht_S$. 

For a certain function, it is necessary to ask whether the adapted coordinate system exists. Varchenko \cite{V76} proved the existence of such coordinate system for analytic functions (without multiple components) in two-dimensional case and violated the analogue in higher dimensions. Also in dimension two, I. A. Ikromov and D. M\"{u}ller \cite{IM11-TAMS} extended Varchenko's result to smooth setting. They also gave necessary and sufficient conditions for the adaptedness of a given system, see \cite[Corollary 2.3 and 4.3]{IM11-TAMS}.

\begin{theorem}\label{theorem-jugde adapted coordinate}
    The coordinates $\{x_0\}$ are adapted to $S$ if and only if one of the following conditions is satiefied:
    \begin{itemize}
        \item[(a)] $\pi(S)$ is a compact edge, and $m(S_{pr}) \leq d_{S,x_0}$.
        \item[(b)] $\pi(S)$ is a vertex.
        \item[(c)] $\pi(S)$ is an unbounded edge.
    \end{itemize}
\end{theorem}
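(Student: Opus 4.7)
The plan is to prove the two implications separately, using the way polynomial changes of variables act on the Newton polyhedron. The guiding principle is that the only mechanism by which an analytic change of coordinates can strictly enlarge the Newton distance in two dimensions is to ``absorb'' a real root of the principal part $S_{pr}$ of multiplicity exceeding $d_{S,x_0}$; conditions (a)--(c) are designed to rule this out.

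For sufficiency I would first dispose of cases (b) and (c). When $\pi(S)$ is a vertex, the point $\boldsymbol{d_S}$ is the unique intersection of its supporting line with $\mathcal{N}(S)$, and any analytic substitution $y=\Phi(x)$ fixing the origin can only add Taylor monomials above the existing faces, so the new Newton polyhedron still meets the diagonal at height at most $d_{S,x_0}$. When $\pi(S)$ is an unbounded edge, $S$ admits a nontrivial monomial factor along a coordinate axis; this factor persists in any coordinates preserving the origin, and the new Newton polyhedron inherits the same contact with the axis, leaving the Newton distance unchanged. Case (a), the compact edge case, is the delicate one: the weight $\kappa$ associated to the edge makes $S_{pr}$ a $\kappa$-homogeneous polynomial that factors over $\R$, up to monomial prefactors, as a product of quasi-linear pieces $(x_2^{\kappa_1}-c_j x_1^{\kappa_2})^{m_j}$; the assumption $\max_j m_j=m(S_{pr})\le d_{S,x_0}$ is precisely what prevents any Puiseux-type substitution from shifting the principal face off the diagonal.

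For necessity, I would argue by contrapositive: assume $\pi(S)$ is a compact edge and $m(S_{pr})>d_{S,x_0}$. Then $S_{pr}$ has a real root $c\neq 0$ on $\mathbb{S}^1$ of multiplicity $m>d_{S,x_0}$, corresponding to a curve $x_2=c\,x_1^{\kappa_1/\kappa_2}$ along which $S$ vanishes to unusually high order. Performing the substitution $y_1=x_1,\ y_2=x_2-\psi(x_1)$ with $\psi$ a polynomial truncation of a Puiseux series fitted to this curve, I would recompute the Taylor support of $S$ in the new coordinates and verify that the diagonal meets the new Newton polyhedron strictly beyond $d_{S,x_0}$, contradicting adaptedness of $\{x_0\}$.

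The hard part will be the necessity direction. Concretely, one must select $\psi$ with exactly the right combination of monomials (a truncated Puiseux series at an order dictated by the weight $\kappa$), carefully track how each coefficient of $S$ transforms under the substitution, and certify that the new principal face lies strictly above the old diagonal. Further complications appear when $\kappa$ is irrational, which requires a rational approximation scheme, and when several roots of maximal multiplicity coexist, forcing an iterative blow-up procedure whose termination must be established. This essentially reproduces the adapted-coordinate algorithm developed in \cite{IM11-TAMS}.
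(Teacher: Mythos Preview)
The paper does not prove this theorem at all: it is stated as a known result and attributed to Ikromov and M\"uller, with the explicit pointer ``see \cite[Corollary 2.3 and 4.3]{IM11-TAMS}.'' There is therefore nothing in the paper to compare your proposal against beyond that citation; your outline is essentially a sketch of the argument in the cited reference, and the paper treats the whole statement as a black box.

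That said, a few of your sufficiency arguments are too loose to stand on their own. In case (b) the claim that an analytic substitution ``can only add Taylor monomials above the existing faces'' is not automatic: a generic change of variables can create new low-degree monomials, and one really needs the weighted-homogeneous structure (or Varchenko's lemma on invariance of the height) to see that no new face crosses the diagonal below $d_{S,x_0}$. In case (c) the assertion that a monomial factor along a coordinate axis ``persists in any coordinates preserving the origin'' is false as stated (think of $x_1^2$ under $x_1\mapsto x_1+x_2$); the correct statement is about the recession cone of $\mathcal{N}(S)$ and requires the particular form of the coordinate changes allowed in the Ikromov--M\"uller algorithm. Your necessity sketch is on the right track and indeed reproduces the Puiseux-substitution step of \cite{IM11-TAMS}, but since the paper simply quotes the result, none of this machinery is needed here.
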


From Theorem \ref{theorem-jugde adapted coordinate}, we deduce that if a coordinate system is not adapted to $S$, then $\pi(S)$ is a compact edge. In $\R^2$, it means that $\pi(S)$ lies on a line $\kappa_1 x_1 + \kappa_2 x_2 =1$. Without loss of generality, we may assume $\kappa_2 \geq \kappa_1 \geq 0$. Then there exists a smooth function $\psi(x_1)$ of $x_1$ with $\psi(0)=0$ such that an adapted coordinate system $\{y_1,y_2\}$ is given by $y_1=x_1, y_2=x_2 - \psi(x_1)$. See e.g. \cite[Theorem 5.1]{IM11-TAMS}. Therefore, we may and will assume that $S$ is expressed in an adapted coordinate system from now on. 

We also need to determine the Varchenko's exponent $\nu(S) \in \{0,1\}$, see \cite[Page 1294]{IM11-JFAA}, which is also called the multiplicity of $S$ in \cite{K84}. If there exists an adapted coordinate system such that $\pi(S)$ is a vertex, and if $ht_S \geq 2$, then we let $\nu(S) = 1$; otherwise, we let $\nu(S) = 0$. However, the condition for $\nu(S) = 1$ is a priori not so easily verified, but there exists a more accessible condition, see \cite[Lemma 1.5]{IM11-JFAA}.

\begin{theorem}\label{theorem-judge nu}
    The following conditions on $S$ are equivalent:
    \begin{itemize}
        \item [(a)] There exists an adapted local coordinate system such that $\pi(S)$ is a vertex.
        \item [(b)] For any adapted local coordinate system $\{y\}$, either $\pi(S)$ is a vertex, or a compact edge and $m(S_{pr})=d_{S,y}$.
    \end{itemize}
\end{theorem}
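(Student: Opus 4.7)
The plan is to derive the equivalence from the classification of adapted coordinates in Theorem \ref{theorem-jugde adapted coordinate}, coupled with Varchenko's iterative algorithm for constructing such coordinates by Puiseux-type substitutions. The content of the theorem is that whether or not the germ $S$ admits a vertex-type adapted chart can be decided from any single adapted chart via the test in (b).

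For $(b) \Rightarrow (a)$: if some adapted chart already gives $\pi(S)$ as a vertex we are done, so assume that every adapted chart $\{y\}$ realises $\pi(S)$ as a compact edge on a line $\kappa_1 y_1 + \kappa_2 y_2 = 1$ with $0 \leq \kappa_1 \leq \kappa_2$, and that $m(S_{pr}) = d_{S,y}$. Since $S_{pr}$ is $(\kappa_1,\kappa_2)$-quasi-homogeneous of weighted degree one, the equality forces a real root of multiplicity $d_{S,y}$, which by quasi-homogeneity extends to a full root curve $y_2 = c\, y_1^{\kappa_2/\kappa_1}$. The substitution $\tilde y_1 = y_1,\ \tilde y_2 = y_2 - c\, y_1^{\kappa_2/\kappa_1}$ (interpreted through the smooth version of Varchenko's algorithm in \cite[Theorem~5.1]{IM11-TAMS}) removes this root from $S_{pr}$, and I would then verify that the new chart is still adapted with Newton distance $ht_S$ but now places $\boldsymbol{d_S}$ at a vertex of the transformed Newton polyhedron.

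For $(a) \Rightarrow (b)$: fix an adapted chart $\{x_0\}$ with $\pi(S)$ a vertex and take any adapted chart $\{y\}$. Theorem \ref{theorem-jugde adapted coordinate} leaves three possibilities in $\{y\}$, and one must rule out ``compact edge with $m(S_{pr}) < d_{S,y}$'' and ``unbounded edge''. The former is excluded by the contrapositive of the argument above: if no real root of multiplicity $d_{S,y}$ is present in $S_{pr}$, then no Varchenko substitution can reshape the polyhedron into a vertex configuration at height $ht_S$, contradicting the existence of $\{x_0\}$. The unbounded-edge case is excluded similarly, since it encodes a non-isolated vanishing direction of $S$ that is invariant under biholomorphic change of coordinates and therefore incompatible with an isolated vertex at $\boldsymbol{d_S}$ in $\{x_0\}$.

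The hard part will be making the Puiseux-type substitution rigorous in the smooth (rather than real-analytic) setting and tracking the resulting Newton polyhedron sharply enough to confirm that $\boldsymbol{d_S}$ becomes an extremal vertex, rather than landing on a new compact edge with redistributed weights. This requires careful bookkeeping of Taylor supports and iterated root extractions, for which I would lean on the detailed analysis in \cite{IM11-TAMS, IM11-JFAA}.
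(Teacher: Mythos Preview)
The paper does not give its own proof of this statement: Theorem~\ref{theorem-judge nu} is quoted verbatim from \cite[Lemma~1.5]{IM11-JFAA} and is used as a black box (to compute Varchenko's exponent $\nu(\phi)$ in Section~\ref{ssec-0and-16} and Section~\ref{ssec--8}). So there is nothing in the paper to compare your argument against; any comparison would have to be with Ikromov--M\"uller's original proof.

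On the merits of your sketch: the direction $(b)\Rightarrow(a)$ is broadly on the right track, but the direction $(a)\Rightarrow(b)$ has a real gap. Your exclusion of the case ``compact edge with $m(S_{pr})<d_{S,y}$'' rests on the claim that if $S_{pr}$ has no root of multiplicity $d_{S,y}$ then \emph{no} coordinate change can produce a vertex at height $ht_S$. But you only argued this for Varchenko-type shears $y_2\mapsto y_2-c\,y_1^{m}$; an arbitrary local analytic (or smooth) change of variables is not of this form, and the contrapositive you invoke does not cover it. Similarly, your exclusion of the unbounded-edge case appeals to an ``invariant non-isolated vanishing direction'', but an unbounded principal face at height $h$ does not by itself force $S$ to vanish identically along a curve; it only constrains the Taylor support, and you would need to show concretely why this is incompatible with a vertex at $(h,h)$ in some other chart. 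Both points are handled in \cite{IM11-JFAA} via a finer structural analysis of the principal part (weighted homogeneity, the relation between $m(S_{pr})$ and the height, and the classification of roots of $S_{pr}$), and you should consult that source rather than try to reinvent it here.
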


For oscillatory integrals in two variables, we don't have to verify the $\R$-nondegenerate condition for the phase as in Theorem \ref{theorem-newton polyhedra}. Under the assumption that $S$ is expressed in an adapted coordinate system, we can get the decay estimate below, see e.g. \cite[Theorem 1.1]{IM11-JFAA}. 

\begin{theorem}\label{theorem-2 dim newton polyhedra}
    Suppose $S:\R^2 \rightarrow \R$ and $\psi: \mathbb{R}^2 \rightarrow \mathbb{R}$ is smoothly supported close enough to the origin. Let $ht_S$ and $\nu(S)$ be defined as above, then we have
    \begin{equation*}
       \left|\int_{\R^2}e^{itS(x)}\psi(x)\,dx\right|\leqslant C (1+|t|)^{-\frac{1}{ht_S}} \log^{\nu(S)}(2+|t|).
    \end{equation*}
\end{theorem}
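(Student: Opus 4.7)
The plan is to prove the estimate by a dyadic decomposition adapted to the weights of the principal face $\pi(S)$, reducing matters to a unit-scale oscillatory-integral bound on each piece. Since $\{x\}$ is an adapted coordinate system, we already have $d_{S,x}=ht_S$, so the target decay exponent equals the Newton distance in our coordinates. Let $(\kappa_1,\kappa_2)$ denote the weight associated with the supporting line of $\pi(S)$, so that $\kappa_1+\kappa_2=1/d_S$ and $S_{\pi(S)}$ is weighted-homogeneous of degree $1$ under the dilations $\delta_s(x_1,x_2)=(s^{\kappa_1}x_1,s^{\kappa_2}x_2)$.

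First I would introduce a smooth dyadic partition of unity $\{\chi_k\}_{k\geq 0}$ on a neighborhood of the origin subordinate to the annuli $A_k=\{x:\rho(x)\sim 2^{-k}\}$, where $\rho$ is the quasi-norm attached to these weights. Writing $I(t)=\sum_k I_k(t)$ and rescaling each $I_k$ via $y=\delta_{2^{k}}(x)$ converts the phase to $\lambda_k\Phi_k(y)$ with $\lambda_k:=t\cdot 2^{-k/d_S}$ and $\Phi_k\to S_{\pi(S)}$ in $C^\infty$ on the unit annulus as $k\to\infty$, picking up a Jacobian factor $2^{-k/d_S}$. This reduces the proof to a uniform-in-$k$ stationary-phase bound on unit-scale integrals with large parameter $\lambda_k$.

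Next, the adapted-coordinate hypothesis, read via the classification of Theorem \ref{theorem-jugde adapted coordinate}, guarantees that $\Phi_k$ has critical points only in a neighborhood of the coordinate axes, with orders of vanishing of $\nabla\Phi_k$ controlled by the relevant face exponents. I would then invoke a two-variable van der Corput / stationary phase estimate on each piece to obtain $|I_k(t)|\lesssim 2^{-k/d_S}(1+|\lambda_k|)^{-\alpha}$ for an exponent $\alpha$ dictated by which of cases (a), (b), (c) of that theorem is in force. Summing in $k$: for $|\lambda_k|\lesssim 1$ (i.e. $k\gtrsim d_S\log_2|t|$) the trivial contribution telescopes to $|t|^{-1/d_S}$, while for $|\lambda_k|\gtrsim 1$ the geometric series over $0\leq k\lesssim d_S\log_2|t|$ is controlled by its worst term.

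The logarithmic factor $\log^{\nu(S)}(2+|t|)$ appears exactly in the borderline situation where the per-piece estimate saturates at size $|t|^{-1/d_S}$ uniformly across $O(\log|t|)$ consecutive dyadic scales; by Theorem \ref{theorem-judge nu} this happens precisely when $\nu(S)=1$, i.e. when an adapted principal face may be chosen to be a vertex with $ht_S\geq 2$, while in all other adapted configurations the geometric series closes without logarithmic loss. The main obstacle will be the unit-scale analysis near the coordinate axes, where the weighted rescaling does not resolve the remaining singularities of $\Phi_k$: one must perform a secondary decomposition along each axis and, in case (a) of Theorem \ref{theorem-jugde adapted coordinate}, exploit the hypothesis $m(S_{pr})\leq d_{S,x_0}$ to bound the multiplicity of the zeros of $\nabla S_{pr}$ along its characteristic root curves, which is what ensures that the one-dimensional van der Corput constants can be chosen uniformly in $k$.
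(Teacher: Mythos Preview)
The paper does not prove this theorem: it is quoted as a known result from Ikromov and M\"uller, cited as \cite[Theorem 1.1]{IM11-JFAA}, and is used as a black box throughout Section~\ref{sec-proof}. So there is no ``paper's own proof'' to compare against.

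As for your sketch, the overall architecture --- weighted dyadic decomposition by the quasi-norm attached to $\pi(S)$, rescale to unit scale, then sum --- is indeed the standard Varchenko-type strategy underlying the Ikromov--M\"uller result. Two points deserve caution. First, your setup assumes $\pi(S)$ lies on a single line $\kappa_1 x_1+\kappa_2 x_2=1$ with a canonical weight; when $\pi(S)$ is a vertex there is a one-parameter family of supporting lines, and when $\pi(S)$ is an unbounded edge the weight may have $\kappa_1=0$, so the ``$S_{\pi(S)}$ weighted-homogeneous of degree $1$'' picture and the Jacobian bookkeeping need separate treatment in each of the three cases of Theorem~\ref{theorem-jugde adapted coordinate}. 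Second, the sentence ``adapted-coordinate hypothesis \ldots guarantees that $\Phi_k$ has critical points only in a neighborhood of the coordinate axes'' is not correct as stated: adaptedness controls the \emph{multiplicities} of the real roots of $S_{pr}$ (via $m(S_{pr})\le d_S$ in case (a)), not their location. The roots of $\nabla S_{pr}$ can lie on arbitrary rays, and the genuine work in the Ikromov--M\"uller proof is a further decomposition into homogeneous sectors around each such root, on which a one-dimensional van der Corput estimate with exponent governed by the root's multiplicity is applied. You flag this as ``the main obstacle'' but do not carry it out; that step is the heart of the argument and is where the hypothesis $m(S_{pr})\le d_S$ is actually consumed.
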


The next result shows that in most cases, the conclusion in Theorem \ref{theorem-2 dim newton polyhedra} is sharp.  

\begin{theorem}\label{theorem-2 dim NP sharp}
    Let $S$, $\psi$, $ht_S$ and $\nu(S)$  be as in Theorem \ref{theorem-2 dim newton polyhedra}. If $\pi(S)$, when given in adapted coordinates, is a compact edge or a vertex, then the following limits
    \begin{equation*}
        \lim_{t\rightarrow \pm \infty} \frac{|t|^{1/ht_{S}}}{\log^{\nu(S)}|t|} \int_{\R^2}e^{itS(x)}\psi(x)\,dx = c_{\pm} \psi(0)
    \end{equation*}
    exist, where the constants $c_{\pm}$ are nonzero and depend on the phase function $S$ only.
\end{theorem}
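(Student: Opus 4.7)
The plan is to use the principal face $\pi(S)$ to perform an anisotropic rescaling that extracts the leading asymptotic term and identifies $c_\pm$. Since we work in adapted coordinates, $d_S = ht_S$, and a natural weight $(\kappa_1,\kappa_2)$ is attached to $\pi(S)$: in the compact-edge case, $\pi(S)$ lies on $\kappa_1 x_1 + \kappa_2 x_2 = 1$ with $\kappa_1,\kappa_2 > 0$ and $\kappa_1 + \kappa_2 = 1/ht_S$, so $S_{pr}$ is $(\kappa_1,\kappa_2)$-quasi-homogeneous of weighted degree one; in the vertex case $\pi(S)=\{(d_S,d_S)\}$, one takes $\kappa_1 = \kappa_2 = 1/(2d_S)$ and $S_{pr}=c_* x_1^{d_S}x_2^{d_S}$.

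A preliminary reduction replaces $\psi$ by $\psi(0)$ modulo negligible error. Writing $\psi = \psi(0)\chi + x_1\eta_1 + x_2\eta_2$ with $\chi$ a cutoff equal to one on $\mathrm{supp}\,\psi$, the two remainder pieces each carry an extra factor $x_j$, which under the anisotropic rescaling produces an additional $|t|^{-\kappa_j}$. Applying Theorem~\ref{theorem-2 dim newton polyhedra} to these shifted amplitudes bounds the tails by $|t|^{-1/ht_S - \min(\kappa_1,\kappa_2)}\log^{\nu(S)}|t|$, which is strictly smaller than the target rate. Hence it suffices to determine the asymptotics of $\psi(0)\int e^{itS(x)}\chi(x)\,dx$.

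For the compact-edge case (so $\nu(S)=0$), the substitution $x_j = |t|^{-\kappa_j}y_j$ (Jacobian $|t|^{-1/ht_S}$) converts the integral to
\[
|t|^{1/ht_S}\int e^{itS(x)}\chi(x)\,dx = \int e^{i\,\mathrm{sgn}(t)S_{pr}(y)+iE_t(y)}\,\chi\bigl(|t|^{-\kappa_1}y_1,|t|^{-\kappa_2}y_2\bigr)\,dy,
\]
where $E_t(y) = t(S-S_{pr})\bigl(|t|^{-\kappa_1}y_1,|t|^{-\kappa_2}y_2\bigr)\to 0$ pointwise on compacta, since every non-principal monomial of $S$ has weighted degree strictly greater than one. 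The candidate limit is $c_\pm = \int_{\R^2} e^{\pm iS_{pr}(y)}\,dy$, interpreted as a conditionally convergent oscillatory integral. I would justify convergence by splitting $y$-space into a bounded disk (dominated convergence, using $\chi(\cdots)\to 1$) and its exterior, where repeated integration by parts against $\nabla S_{pr}$ --- nonvanishing off the coordinate axes by the $\R$-nondegeneracy guaranteed through the adaptedness assumption in Theorem~\ref{theorem-jugde adapted coordinate} --- yields $t$-uniform decay. Nonvanishing of $c_\pm$ then reduces to explicit Mellin-type one-variable integrals determined by the quasi-homogeneous structure of $S_{pr}$.

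For the vertex case, if $d_S=1$ the phase has a nondegenerate critical point and classical stationary phase gives the limit with $\nu(S)=0$. If $d_S\ge 2$, then $\nu(S)=1$, and the logarithmic factor is exhibited by the model integral $\int_{(0,1)^2} e^{itc_*(x_1 x_2)^{d_S}}\,dx = \int_0^1 (-\log u)\,e^{itc_* u^{d_S}}\,du$ via $u=x_1 x_2$; this is computed explicitly to produce $C_\pm|t|^{-1/d_S}\log|t|$, with $C_\pm$ a nonzero multiple of $\Gamma(1/d_S)$. The full integral is reduced to this model by dyadically decomposing $\chi$ in the coordinate directions and discarding off-diagonal contributions via van der Corput applied in the nondegenerate direction. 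The main obstacle I anticipate is the exterior-region estimate in the compact-edge case: the cancellation yielding convergence of $\int e^{iS_{pr}(y)}\,dy$ must be made quantitative uniformly in $|t|$ along the directions in which $\nabla S_{pr}$ degenerates, which is precisely the technical heart of Varchenko's method as refined by Ikromov--M\"uller in \cite{IM11-JFAA}.
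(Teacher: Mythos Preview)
The paper does not prove this theorem; immediately after the statement it writes ``This result is proved in \cite[Theorem 1.3]{IM11-JFAA}, see also \cite[Theorem 1.2]{G09}.'' So there is no in-paper proof to compare your proposal against --- the statement is imported from the literature as a black box.

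On the merits of your sketch: the overall architecture (anisotropic dilation by the weights attached to $\pi(S)$, reduction from $\psi$ to $\psi(0)$, identification of $c_\pm$ with an oscillatory integral of $S_{pr}$) is indeed the skeleton of the Ikromov--M\"uller argument. But there is a real gap in the compact-edge case. You assert that $\nabla S_{pr}$ is nonvanishing on $(\R\setminus\{0\})^2$ ``by the $\R$-nondegeneracy guaranteed through the adaptedness assumption in Theorem~\ref{theorem-jugde adapted coordinate}.'' Adaptedness does \emph{not} imply $\R$-nondegeneracy: Theorem~\ref{theorem-jugde adapted coordinate}(a) only gives $m(S_{pr})\le d_S$, which bounds the vanishing order of $S_{pr}$ along rays, not the location of critical points of $S_{pr}$. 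The paper itself exhibits, in Section~\ref{ssec--8}, a phase expressed in adapted coordinates that is explicitly \emph{not} $\R$-nondegenerate. Consequently your exterior-region estimate --- integration by parts against $\nabla S_{pr}$ --- breaks down exactly along the real roots of $S_{pr}$ away from the axes, and the conditional convergence of $\int_{\R^2} e^{\pm i S_{pr}(y)}\,dy$ is not established. In \cite{IM11-JFAA} this is handled by factoring $S_{pr}$ over $\R$, localizing to narrow homogeneous sectors about each real root, and using van der Corput in the transverse variable (with order governed by the multiplicity, which adaptedness caps at $d_S$); your sketch would need this ingredient to close.
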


This result is proved in \cite[Theorem 1.3]{IM11-JFAA}, see also \cite[Theorem 1.2]{G09}. In particular, if $\pi(S)$ is unbounded, the estimate in Theorem \ref{theorem-2 dim newton polyhedra} may fail to be sharp. See \cite[Remark 1.4(c)]{IM11-JFAA} for a counterexample. Furthermore, the following result on stability of oscillatory integrals is valid, see \cite[Theorem 2.1]{K84}.

\begin{theorem}\label{theorem-2dim NP uniform}
    Let $S$, $\psi$, $ht_S$ and $\nu(S)$  be as in Theorem \ref{theorem-2 dim newton polyhedra}. Moreover, if the inequality
    \begin{equation*}
        \left| \int_{\mathbb{R}^2} e^{itS(x)} \psi(x)dx \right| \leqslant C(1+|t|)^{\beta_S} \log^{p_S}(2+|t|)
    \end{equation*}
    holds with some exponent pair $(\beta_S,p_S)$,
    then $M(S) \curlyeqprec (\beta_S,p_S)$.
    In particular, $M(S) \curlyeqprec (-1/{ht_S},\nu(S))$.
\end{theorem}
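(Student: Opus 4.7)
The ``in particular'' statement is immediate from combining the main stability claim with Theorem \ref{theorem-2 dim newton polyhedra}, which supplies the estimate for $\int e^{itS}\psi$ with exponent pair $(-1/ht_S,\nu(S))$. So I will concentrate on the main assertion: given a bound for the unperturbed integral with exponent pair $(\beta_S,p_S)$, extend it uniformly to $\int e^{it(S+P)}\psi$ for all $P\in\mathcal{H}_r(\epsilon)$ with $\epsilon$ sufficiently small.

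My overall strategy follows Karpushkin. The key preliminary observation is that, by Cauchy's estimates, any $P\in\mathcal{H}_r(\epsilon)$ satisfies $|\partial^\gamma P(x)|\le C_\gamma\epsilon$ for every multi-index $\gamma$ and every $x\in B_{\R^d}(0,r/2)$. Hence $P$ is small in every $C^N$-norm simultaneously, uniformly over the class $\mathcal{H}_r(\epsilon)$, which lets me treat $S+P$ as a real-analytic perturbation of $S$ whose effect on any finite-order derivative is controlled by $\epsilon$.

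Next I would perform a resolution of $S$ near the origin, in the spirit of the adapted-coordinates constructions underlying Theorems \ref{theorem-jugde adapted coordinate} and \ref{theorem-2 dim newton polyhedra}. This produces a finite decomposition of a neighborhood of $0$ together with real-analytic changes of coordinates on each piece, after which $S$ is either in a form with non-vanishing gradient or in a quasi-monomial form $y_1^a y_2^b U(y)$ with $U$ a smooth unit, encoding the principal face $\pi(S)$. Each coordinate change extends to a complex polydisk, so $P$ pulls back to some $\tilde P\in\mathcal{H}_{r'}(\epsilon')$ with $\epsilon'$ still small. On a non-degenerate chart, repeated integration by parts gives decay faster than any $|t|^{-N}$, uniformly in $P$, since $\nabla(S+P)$ remains non-vanishing for $\epsilon$ small. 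On a monomial chart, iterated applications of van der Corput's lemma (Theorem \ref{lemma-van der corput}) in the directions dictated by the Newton polyhedron recover the exponent pair $(\beta_S,p_S)$; the crucial lower bounds on the derivatives of $S$ that drive van der Corput remain valid for $S+\tilde P$ precisely because the derivatives of $\tilde P$ are $O(\epsilon)$.

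The main obstacle lies in the monomial charts: one must verify that the exact pair $(\beta_S,p_S)$ persists under small perturbations, which requires careful bookkeeping of how the Newton-polyhedron data interact with the van der Corput constants. This is where the hypothesis that the estimate already holds for the unperturbed $S$ is essential, since the argument reduces to a quantitative version of the reasoning underlying Theorem \ref{theorem-2 dim newton polyhedra}. Finally, summing the contributions from the finitely many charts yields the uniform bound \eqref{equ-uniform}, with an integer $N=N(d)$ and the amplitude dependence cleanly absorbed into $\|\psi\|_{C^N(A)}$.
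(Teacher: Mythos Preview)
The paper does not supply its own proof of this theorem; it is stated as a known result and attributed to Karpushkin, with the citation ``see \cite[Theorem 2.1]{K84}'' immediately preceding the statement. There is therefore no proof in the paper to compare your proposal against.

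Your outline does track the broad shape of Karpushkin's argument (derivative control on $P$ via Cauchy estimates, reduction to local models, then stability on each piece). Two comments if you intend to develop it further. First, the local reduction in two variables is not to a single monomial chart $y_1^a y_2^b U(y)$; Karpushkin's proof proceeds by an induction governed by the weighted degrees attached to the edges of the Newton polygon, and the stability analysis on each piece is substantially more delicate than a direct iteration of van der Corput---this is precisely the ``careful bookkeeping'' you flag but do not carry out. Second, your handling of the general exponent pair $(\beta_S,p_S)$ is not quite right: Karpushkin establishes stability of the \emph{optimal} pair $(-1/ht_S,\nu(S))$, and the statement for an arbitrary $(\beta_S,p_S)$ then follows because any pair satisfied by the unperturbed integral dominates the optimal one (so the perturbed bound with the optimal pair implies the perturbed bound with the weaker pair). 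Your sketch instead suggests that the monomial-chart analysis directly recovers whatever $(\beta_S,p_S)$ was assumed, which is not how the argument runs.
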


\section{Proof of Theorem \ref{theorem-main conclusion}}\label{sec-proof}

In this section we analyze $I(v,t)$ in detail. First, we write $I(v,t)$ as an integral on $\R^d$ by introducing a nonnegative function $\eta_0 \in C_0^{\infty}(\R^d)$ supported in $(-2\pi,2\pi)^d$. Besides, it should be non-vanishing on some neighborhood of $\mathbb{T}^d$ such that
\begin{equation*}
    \sum_{j\in\Z^d} \eta_0 (x+2\pi j)= 1,\;\forall\,x\in\R^d.
\end{equation*}
Noticing that $\omega$ is periodic and $e^{2\pi i x\cdot j}=1$, we have
\begin{equation}\label{equ-from torus to Rn}
    I(v,t)= \sum_{j\in\Z^d} \int_{\mathbb{T}^d}  e^{it\phi(v,\xi)} \, \eta_0(\xi+2\pi j) d\xi= \int_{\R^d}e^{it\phi(v,\xi)} \eta_0(\xi) d\xi.
\end{equation}

We then analyze the critical points of the phase,
\begin{equation*}
    \phi(v,\xi) = v\cdot \xi + \omega^4 + \gamma \omega^2:=v\cdot \xi + \varphi(\xi).
\end{equation*}
A direct calculation gives
\begin{equation}\label{equ-nabla of the phase}
    \nabla_{\xi} \phi(v,\xi)=\nabla \varphi(\xi)+ v = (4\omega(\xi)^2+2\gamma) \left(\sin\xi_1,\cdots,\sin\xi_d\right)+v,\ \xi\in\mathbb{T}^d.
\end{equation}
For any $v\in\R^d$, we define
\begin{equation*}
    \mathcal{C}_{v}:=\{\xi \in \mathbb{T}^d\,:\, \nabla_{\xi}\phi(v,\xi)=0\}.
\end{equation*}
By \eqref{equ-nabla of the phase}, a necessary condition for $\mathcal{C}_v \neq \varnothing$ is that 
\begin{equation}\label{equ-|v|<c}
    |v|^2 = |\nabla \varphi|^2 = (4\omega(\xi)^2+2\gamma)^2 \sum_{j=1}^d \sin^2 \xi_j \leq d(16d+2|\gamma|)^2:=\mathbf{c}^2.
\end{equation}
Note that $\mathbf{c}>0$ is a constant as long as the dimension $d$ and the parameter $\gamma$ are fixed. Therefore, $|v|>\mathbf{c} $ implies $\mathcal{C}_v= \varnothing$.

Next, we set, for $k=0,1,\cdots, d$, 
\begin{equation*}
    \Sigma_{k}=\Sigma_{k}(v,d):=\{\xi \in \R^d:  \mbox{corank\,Hess}\varphi(\xi)=k \},
\end{equation*}
and let $\Omega_k=\Omega_{k}(d)$ collect all velocities $v$ satisfying $\nabla_{\xi}\phi(v,\xi_0)=0$ for at least one $\xi_0 \in \Sigma_k$. Then, the set of degenerate critical points $\Sigma$ and the corresponding velocity set $\Omega$ can be formulated as 
\begin{equation*}
\Sigma:=\bigcup_{k=1}^{d}\Sigma_{k},\;\;\mbox{and}\;\;\;
\Omega:=\bigcup_{k=1}^{d} \Omega_{k}.
\end{equation*} 

Now we give the characterization of $\Sigma$. Only the first quadrant $[0,\pi]^d$ will be considered by symmetry.

\begin{lemma}\label{lem-characterization of degenerate}
    Suppose that $d = 2$ and $\xi=(\xi_1,\xi_2)\in [0,\pi]^2$. In what follows, we set  
    \begin{equation*}
        D_{\gamma} = \left\{\xi \in [0,\pi]^2:2\omega^2+\gamma=0\right\}
    \end{equation*}
    and
    \begin{equation*}
        E_{\gamma} = \left\{\xi \in [0,\pi]^2\backslash \left\{\left(\frac{\pi}{2},\frac{\pi}{2}\right)\right\}\,\,:\sum_{j=1}^2 \left(8\cos \xi_j - 4\sec \xi_j\right) = 8+\gamma\right\}.
    \end{equation*}
    
    \begin{itemize}

    \item[(a)] If $\gamma=0$, then 
    \begin{equation*}
        \Sigma_2 = \{(0,0)\};\,\,\Sigma_1 = \left\{\left(\frac{\pi}{2},\frac{\pi}{2}\right)\right\} \cup E_0 \backslash \{(0,0)\}.
    \end{equation*}     
    \item[(b)] If $\gamma=-8$, then 
    \begin{equation*}
        \Sigma_2 = \{(0,\pi),(\pi,0)\};\,\,\Sigma_1 = D_{-8} \cup \left\{\left(\frac{\pi}{2},\frac{\pi}{2}\right)\right\} \cup E_{-8}\backslash \{(0,\pi),(\pi,0)\}.
    \end{equation*}
    \item[(c)] If $\gamma=-16$, then
    \begin{equation*}
        \Sigma_2 = \{(\pi,\pi)\};\,\,\Sigma_1 = \left\{\left(\frac{\pi}{2},\frac{\pi}{2}\right)\right\} \cup E_{-16}\backslash \{(\pi,\pi)\}.
    \end{equation*}    
    \item[(d)] If $\gamma \in (-16,-8) \cup (-8,0)$, then
    \begin{equation*}
        \Sigma_2 = \varnothing;\,\,\Sigma_1 = D_{\gamma} \cup \left\{\left(\frac{\pi}{2},\frac{\pi}{2}\right)\right\} \cup E_{\gamma}.
    \end{equation*}
    \item[(e)] If $\gamma \in (-\infty,-16) \cup (0,\infty)$, then
    \begin{equation*}
        \Sigma_2 = \varnothing;\,\,\Sigma_1 = \left\{\left(\frac{\pi}{2},\frac{\pi}{2}\right)\right\} \cup E_{\gamma}.
    \end{equation*}
    \end{itemize}
\end{lemma}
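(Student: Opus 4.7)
The plan is to derive the characterization from an explicit computation of $\mathrm{Hess}\,\varphi$ and a clean factorization of its determinant. Since $\partial_i\omega^2=2\sin\xi_i$, the gradient and Hessian of $\varphi=\omega^4+\gamma\omega^2$ are
\begin{equation*}
\partial_i\varphi=(4\omega^2+2\gamma)\sin\xi_i,\qquad \partial_i\partial_j\varphi=8\sin\xi_i\sin\xi_j+(4\omega^2+2\gamma)\cos\xi_i\,\delta_{ij}.
\end{equation*}
For $d=2$, expanding the $2\times2$ determinant cancels the terms $64\sin^2\xi_1\sin^2\xi_2$ and yields
\begin{equation*}
\det\mathrm{Hess}\,\varphi=(4\omega^2+2\gamma)\left\{8\bigl[\sin^2\xi_1\cos\xi_2+\sin^2\xi_2\cos\xi_1\bigr]+(4\omega^2+2\gamma)\cos\xi_1\cos\xi_2\right\}.
\end{equation*}

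From here I would argue that $\xi\in\Sigma$ iff either $4\omega^2+2\gamma=0$ (which is the definition of $D_\gamma$) or the bracketed factor vanishes. For the latter, away from the coordinate lines $\cos\xi_j\neq0$, dividing by $\cos\xi_1\cos\xi_2$ and using $\sin^2\xi_j\sec\xi_j=\sec\xi_j-\cos\xi_j$ together with $\omega^2=4-2\cos\xi_1-2\cos\xi_2$ reduces the bracket precisely to $\sum_{j}(8\cos\xi_j-4\sec\xi_j)=8+\gamma$, i.e.\ to $E_\gamma$. If instead $\xi_1=\pi/2$ (so $\cos\xi_1=0$), the bracket collapses to $8\cos\xi_2=0$, forcing $\xi_2=\pi/2$; this accounts for the isolated point $(\pi/2,\pi/2)$ that is never picked up by $E_\gamma$. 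Hence in $[0,\pi]^2$,
\begin{equation*}
\Sigma=D_\gamma\cup\{(\pi/2,\pi/2)\}\cup E_\gamma.
\end{equation*}

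Separating $\Sigma_2$ from $\Sigma_1$ is the second step. The off-diagonal entry $8\sin\xi_1\sin\xi_2$ must vanish for corank $2$, so some $\sin\xi_i=0$, whence $\cos\xi_i=\pm1$; the $(i,i)$-diagonal then forces $4\omega^2+2\gamma=0$, after which the other diagonal yields $\sin\xi_j=0$ as well. Thus every corank-$2$ point lies in $\{0,\pi\}^2$ and satisfies $2\omega^2+\gamma=0$. Since $\omega^2$ at the four vertices $(0,0),(0,\pi),(\pi,0),(\pi,\pi)$ takes the values $0,4,4,8$ respectively, corank-$2$ points exist exactly when $\gamma\in\{0,-8,-16\}$, giving the $\Sigma_2$ of items (a)-(c); for the other $\gamma$ we have $\Sigma_2=\varnothing$.

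Finally, I would assemble each case by writing $\Sigma_1=\Sigma\setminus\Sigma_2$ and checking which corank-$2$ vertices actually lie in $D_\gamma$ or $E_\gamma$ so as to subtract them correctly. For example, $(0,0)\in E_0$ because $(8-4)+(8-4)=8$, which explains the amputation in (a); analogously one verifies $(0,\pi),(\pi,0)\in D_{-8}\cap E_{-8}$ and $(\pi,\pi)\in E_{-16}$. The existence of $D_\gamma$ itself is immediate from $\omega^2=-\gamma/2$: this is solvable in $[0,\pi]^2$ iff $-\gamma/2\in[0,8]$, which explains why $D_\gamma$ contributes nontrivially only in cases (b) and (d). The main technical step is the determinant factorization together with its reduction to the elegant closed form $E_\gamma$; the rest is routine case-by-case verification, though careful bookkeeping of which vertex lies in which of $D_\gamma,E_\gamma$ is the part most prone to error.
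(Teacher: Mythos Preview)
Your proposal is correct and follows essentially the same route as the paper: compute the Hessian entries $\partial_i\partial_j\varphi=8\sin\xi_i\sin\xi_j+(4\omega^2+2\gamma)\cos\xi_i\,\delta_{ij}$, factor the determinant as $(4\omega^2+2\gamma)$ times a bracket, and reduce the bracket (after dividing by $\cos\xi_1\cos\xi_2$) to the $E_\gamma$ condition, with the $(\pi/2,\pi/2)$ point handled separately. Your treatment of $\Sigma_2$ via the vanishing of the off-diagonal entry is also the paper's argument, and your extra bookkeeping of which corank-$2$ vertices fall in $D_\gamma$ versus $E_\gamma$ is a welcome bit of care that the paper leaves implicit.
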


\begin{proof}
    We first suppose that $d \geq 2$. Following \eqref{equ-nabla of the phase}, we have $\mbox{Hess}_{\xi} \phi (\xi) = \boldsymbol{D}(\xi)+\boldsymbol{O}(\xi)$. Here $\boldsymbol{D}$ is a diagonal matrix with elements $d_{jj}(\xi) = (4\omega^2+2\gamma) \cos \xi_j$ and the $(i,j)$ entry of $\boldsymbol{O}$ is $8\sin\xi_i \sin\xi_j$, $1 \leq i,j \leq d$. 

    After a tedious calculation, the determinant of $\mbox{Hess}_{\xi} \phi (\xi)$ is given by
    \begin{equation*}
        Det(\xi) = (4\omega^2+2\gamma)^{d-1} \left[(4\omega^2+2\gamma) \prod_{j=1}^{d} \cos \xi_j + 8\sum_{j=1}^{d} \sin^2\xi_j \prod_{l \neq j} \cos \xi_l \right].
    \end{equation*}
    We observe that $Det(\xi)$ will vanish if $4\omega^2+2\gamma =0$ or at least two components of $\xi$ are equal to $\frac{\pi}{2}$. Besides, we can rewrite it as 
    \begin{equation*}
        \begin{aligned}
            Det(\xi)
            & = (4\omega^2+2\gamma)^{d-1} \left(\prod_{j=1}^{d} \cos \xi_j\right) \left[4\omega^2+2\gamma+ 8\sum_{j=1}^{d} \frac{\sin^2 \xi_j}{\cos \xi_j}\right] \\
            & = 2(4\omega^2+2\gamma)^{d-1} \left(\prod_{j=1}^{d} \cos \xi_j\right) \left[4d+\gamma -\sum_{j=1}^{d} \left(8\cos \xi_j - \frac{4}{\cos \xi_j}\right)\right].
        \end{aligned}
    \end{equation*}
    Now for $d=2$, we deduce that $Det(\xi)=0$ if and only if $\xi \in \left\{\left(\frac{\pi}{2},\frac{\pi}{2}\right)\right\} \cup D_{\gamma} \cup E_{\gamma}$. Moreover, the only possible situation for $\mbox{rank}\,\mbox{Hess}_{\xi}\phi(\xi)=0$ is that 
    \begin{equation*}
        \sin \xi_1 = \sin \xi_2 = 4 \omega(\xi)^2+2\gamma =0.
    \end{equation*}
    Collecting all these facts, we complete the proof.
\end{proof}

As we discussed before, there is no critical points if $|v| > \mathbf{c}$. In this region, we have the following lemma.
\begin{lemma}\label{lem-outside decay any faster}
    Suppose that $d \geq 2$. Then for any $N \in \N$, there exists $C=C(d,N,\gamma)>0$ such that 
    \begin{equation*}
        |I(v,t)| \leqslant C (1+|t|)^{-N}
    \end{equation*}
    holds uniformly in $|v| \geq \mathbf{c}+1$.
\end{lemma}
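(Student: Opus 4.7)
Since the hypothesis $|v|\geq\mathbf{c}+1$ is precisely designed so that the phase $\phi(v,\cdot)$ has no critical point on the support of $\eta_0$, the natural route is a non-stationary phase integration by parts, and the only non-routine issue is to make every constant independent of $v$. Starting from \eqref{equ-from torus to Rn}, the estimate \eqref{equ-|v|<c} reads $|\nabla\varphi(\xi)|\leq\mathbf{c}$ for every $\xi\in\R^d$, hence
\begin{equation*}
|\nabla_{\xi}\phi(v,\xi)|=|v+\nabla\varphi(\xi)|\geq |v|-\mathbf{c}\geq 1,
\end{equation*}
uniformly in $\xi\in\R^d$ and in $v$ with $|v|\geq\mathbf{c}+1$.

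Introduce the first-order operator
\begin{equation*}
L:=\frac{1}{it\,|\nabla_{\xi}\phi|^{2}}\,\nabla_{\xi}\phi\cdot\nabla_{\xi},\qquad L(e^{it\phi})=e^{it\phi},
\end{equation*}
whose formal transpose is $L^{\top}g=-(it)^{-1}\nabla_{\xi}\!\cdot\!\bigl(|\nabla_{\xi}\phi|^{-2}\,\nabla_{\xi}\phi\cdot g\bigr)$. Since $\eta_{0}\in C_{0}^{\infty}(\R^{d})$, no boundary terms appear, and iterating integration by parts $N$ times yields
\begin{equation*}
I(v,t)=\int_{\R^{d}}e^{it\phi(v,\xi)}\,(L^{\top})^{N}\eta_{0}(\xi)\,d\xi.
\end{equation*}

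The heart of the argument is to check that $\|(L^{\top})^{N}\eta_{0}\|_{\infty}\leq C_{N}|t|^{-N}$ with $C_{N}$ independent of $v$. The key observation is that every mixed partial $\partial_{\xi}^{\alpha}\phi$ with $|\alpha|\geq 2$ coincides with $\partial_{\xi}^{\alpha}\varphi$, so it is bounded by a constant depending only on $d$ and $\gamma$. An induction on $|\alpha|$ then shows that $\partial_{\xi}^{\alpha}\!\bigl(\partial_{j}\phi/|\nabla_{\xi}\phi|^{2}\bigr)$ is uniformly bounded for $|v|\geq\mathbf{c}+1$: each such derivative is a finite sum of terms of the shape $P(\partial^{\leq|\alpha|+1}\varphi)\cdot(\nabla_{\xi}\phi)^{\beta}/|\nabla_{\xi}\phi|^{2k}$ with $|\beta|\leq 2k-1$, and the comparison $|\nabla_{\xi}\phi|\leq|v|+\mathbf{c}$ versus $|\nabla_{\xi}\phi|\geq\max\{1,|v|-\mathbf{c}\}$ forces every such quotient to stay bounded (and actually to decay like $|v|^{-1}$ at infinity). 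Propagating this through $N$ applications of $L^{\top}$ gives the claimed pointwise bound on $(L^{\top})^{N}\eta_{0}$.

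Integrating over the compact support of $\eta_{0}$ yields $|I(v,t)|\leq C_{N}|t|^{-N}$ for $|t|\geq 1$, and combining with the trivial estimate $|I(v,t)|\leq\|\eta_{0}\|_{L^{1}}$ for $|t|\leq 1$ produces the desired bound $|I(v,t)|\leq C(1+|t|)^{-N}$. The main obstacle, modest but genuine, is the bookkeeping in the induction above: one must verify that the $v$-dependence hidden in the numerator $\nabla_{\xi}\phi=v+\nabla\varphi$ is always tamed by the correct power of $|\nabla_{\xi}\phi|$ in the denominator. Everything else is a straightforward application of the compact support of $\eta_{0}$ and the smoothness of $\varphi$.
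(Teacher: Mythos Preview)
Your proposal is correct and follows essentially the same non-stationary phase argument as the paper: integrate by parts $N$ times against the transpose of $L$ and verify that the constants stay uniform in $|v|\geq\mathbf{c}+1$. The only cosmetic difference is that the paper packages the $v$-uniformity by rescaling the phase to $\phi_1=\phi/|v|$ and showing that the operator $\mathcal{L}h=\nabla_\xi\!\cdot(h\,\nabla_\xi\phi_1/|\nabla_\xi\phi_1|^2)$ preserves a class of functions with uniform derivative bounds, whereas you achieve the same control by the direct power-counting $|\beta|\leq 2k-1$; both arguments exploit the same fact that $|\nabla_\xi\phi|\sim|v|$ tames every appearance of $v$ in the numerator.
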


\begin{proof}
    By \eqref{equ-|v|<c}, we observe that 
    \begin{equation}\label{equ-inproof1}
        |\nabla_{\xi}\phi| = |v - \nabla \varphi| \geq |v| - \mathbf{c}.
    \end{equation}
    Consider functions on $v \in \R^d$ and $\xi \in \R^d$ that are infinitely differentable in $\xi$, and let $\mathcal{C}^{\infty}$ denote the class of such functions $h$ so that
    \begin{equation*}
        |\partial_{\xi}^{\alpha} h(v,\xi)| \leq M_k, \quad \mbox{over all}\,\,|v| \geq \mathbf{c}+1,\,\xi \in \R^d\,\,\mbox{and}\,\,|\alpha| \leq k.
    \end{equation*}
    In particular, the cut-off function $\eta_0 \in \mathcal{C}^{\infty}$, cf. \eqref{equ-from torus to Rn}.

    Then we define the operator $\mathcal{L}$ by 
    \begin{equation}\label{equ-operator L}
        \mathcal{L} h = |v| \nabla_{\xi} \cdot \frac{h \nabla_{\xi} \phi}{|\nabla_{\xi} \phi|^2} = \nabla_{\xi} \cdot \frac{h \nabla_{\xi} \phi_1}{|\nabla_{\xi} \phi_1|^2},
    \end{equation}
    where $\phi_1 = \phi / |v|$. Due to \eqref{equ-inproof1}, we have $|\nabla_{\xi} \phi| \geq 1$ and $|\nabla_{\xi} \phi_1| \geq \frac{1}{\mathbf{c}+1}$ when $|v|\geq \mathbf{c}+1$. The most important property is that $\mathcal{L}$ takes functions in $\mathcal{C}^{\infty}$ to functions in $\mathcal{C}^{\infty}$. To see this, we suppose that $h \in \mathcal{C}^{\infty}$ (with bounds $M_0(h), M_1(h),$ etc.) and expand the gradient in \eqref{equ-operator L} to get
    \begin{equation*}
        \begin{aligned}
            |\mathcal{L} h(v,\xi)| 
            & = |v|\Bigg|  \sum_{k=1}^{d} \frac{\frac{\partial h}{\partial \xi_k} \frac{\partial \phi}{\partial \xi_k}}{|\nabla_{\xi}\phi|^2}+\frac{h\frac{\partial^2\phi}{\partial \xi_k^2}}{|\nabla_{\xi}\phi|^2} + 2h \frac{\partial \phi}{\partial \xi_k} \frac{\sum_{l=1}^{d} \frac{\partial^2 \phi}{\partial \xi_k \partial \xi_l}}{|\nabla_{\xi}\phi|^4} \Bigg| \\
            & \leq |v| \sum_{k=1}^{d} \left(\frac{M_1(h)}{|\nabla_{\xi}\phi|} + \frac{\partial^2\phi}{\partial \xi_k^2}\frac{M_0(h)}{|\nabla_{\xi}\phi|^2} + 2\sum_{l=1}^{d} \frac{M_0(h)}{|\nabla_{\xi}\phi|^3}\frac{\partial^2 \phi}{\partial \xi_k \partial \xi_l}\right) \\
            & \leq \tilde{C}_{d,\gamma,M_0(h),M_1(h)} |v| \left( \frac{1}{|\nabla_{\xi}\phi|}+ \frac{1}{|\nabla_{\xi}\phi|^2}+\frac{1}{|\nabla_{\xi}\phi|^3}\right) \\
            & = \tilde{C} \left( \frac{1}{|\nabla_{\xi} \phi_1|}+ \frac{1}{|v||\nabla_{\xi} \phi_1|^2}+\frac{1}{|v|^2|\nabla_{\xi} \phi_1|^3}\right) \\
            & \leq 3\tilde{C}(\mathbf{c}+1).
        \end{aligned}
    \end{equation*}
    In the third line we have used the facts: 
    \begin{equation*}
        \begin{aligned}
            & \frac{\partial^2\phi}{\partial \xi_k^2} = (4\omega^2+2\gamma)\cos \xi_k +8 \sin^2 \xi_k \leq 16d+2|\gamma|+8, \\
            & \sum_{l=1}^{d} \frac{\partial^2 \phi}{\partial \xi_k \partial \xi_l} = (4\omega^2+2\gamma)\cos \xi_k+8\sum_{l=1}^{d}\sin \xi_k \sin \xi_l \leq 24d+2|\gamma|.
        \end{aligned}
    \end{equation*}
    Above all, we have shown that $\mathcal{L}h$ can be bounded by a constant which is independent of $v$. The rest estimates about the derivatives of $\mathcal{L}h$ are similar. 
    
    Finally, we integrate $I$ by parts $N$ times to obtain
    \begin{equation*}
        |I| = \Bigg| \left(\frac{-1}{it|v|}\right)^N \int_{\R^2}e^{it\phi} \mathcal{L}^N (\eta_0)\,d\xi\Bigg| \leq \frac{M_N(\eta_0)}{(t|v|)^N} \leq M_N (\mathbf{c}+1)^{-N} |t|^{-N}.
    \end{equation*}
    The estimate $|I|\leq C$ holds trivially, and the proof is complete.
\end{proof}
 
Now it suffices to estimate $I$ when $v \in \mathcal{V}:=\{v \in \R^2: |v| \leq \mathbf{c}+1\}$. Due to the appearence of critical points for such $v$, a different strategy will be used, which is explained in the following lemma. Roughly speaking, in order to obtain a global uniform estimate for $I$, we only need to bound it locally in $\mathcal{V} \times \mathcal{U}$, where we denote by $\mathcal{U}$ the compact support of $\eta_0$.

\begin{lemma}\label{lemma-inside the cone}
    For any $(v_0,\xi) \in \mathcal{V} \times \mathcal{U}$, suppose that 
    \begin{equation}\label{equ-Mxi}
        M(\phi(v_0,\cdot),\xi) \curlyeqprec (\beta_{v_0,\xi},p_{v_0,\xi}) \,\,\,\,\mbox{for some}\,(\beta_{v_0,\xi},p_{v_0,\xi}) \in \R \times \N.
    \end{equation}
    Then there exist a proper exponent pair $(\beta_0,p_0)$ and $C_0=C_0(\gamma)$ such that
    \begin{equation*}
        |I(v,t)| \leq C_0(1+|t|)^{\beta_0}\log^{p_0}(2+|t|)
    \end{equation*}
    holds uniformly in $v \in \mathcal{V}$.

    In particular, the choice of $(\beta_0,p_0)$ depends closely on $(\beta_{v_0,\xi},p_{v_0,\xi})$ in \eqref{equ-Mxi}, which will be explained in the coming proof.
\end{lemma}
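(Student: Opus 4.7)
The plan is a standard compactness-plus-stability argument. The hypothesis \eqref{equ-Mxi} furnishes, at every basepoint $(v_0,\xi)$, a \emph{local} estimate that is stable under small perturbations of the phase in the class $\mathcal{H}_r(\epsilon)$. Moving $v$ while keeping the basepoint fixed is precisely such a perturbation, and compactness of $\mathcal{V}\times\mathcal{U}$ will then promote the local estimates to a global one.

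First I would exploit the identity
\begin{equation*}
    \phi(v,\xi) = \phi(v_0,\xi) + (v-v_0)\cdot\xi ,
\end{equation*}
with the crucial observation that the second summand is linear in $\xi$, hence entire. After translating to the origin via $\xi = \xi_0 + y$, the perturbation $P(y) := (v-v_0)\cdot y$ extends holomorphically to $\C^d$ and satisfies $|P(w)| \leq |v-v_0|\, r\sqrt{d}$ on $\overline{B}_{\C^d}(0,r)$, so $P\in\mathcal{H}_r(\epsilon)$ whenever $|v-v_0|\,r\sqrt{d}<\epsilon$. The constant drift $(v-v_0)\cdot \xi_0$ produced by the same shift is irrelevant, as it only multiplies $I$ by a unimodular factor.

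Next, for each fixed $v_0\in\mathcal{V}$, apply \eqref{equ-Mxi} at every $\xi\in\mathcal{U}$ to extract a neighborhood $A(\xi)$ together with parameters $r(\xi),\epsilon(\xi),C(\xi)$ and an exponent pair $(\beta_{v_0,\xi},p_{v_0,\xi})$. Compactness of $\mathcal{U}$ yields a finite subcover $\{A(\xi_j)\}_{j=1}^{m}$ with common lower bounds $r(v_0),\epsilon(v_0)>0$; set $\delta(v_0):=\epsilon(v_0)/(r(v_0)\sqrt{d})$. A second compactness argument on $\mathcal{V}$ yields finitely many centers $v_0^{(1)},\ldots,v_0^{(K)}$ whose $\delta(v_0^{(k)})$-neighborhoods cover $\mathcal{V}$. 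For $v$ in $B(v_0^{(k)},\delta(v_0^{(k)}))$, I would split
\begin{equation*}
    I(v,t)=\sum_{j=1}^{m}\int_{\R^d}e^{it\phi(v,\xi)}\eta_0(\xi)\chi_j(\xi)\,d\xi,
\end{equation*}
with $\{\chi_j\}$ a smooth partition of unity subordinate to $\{A(\xi_j)\}$ associated with $v_0^{(k)}$, translate each piece to the origin as in the definition $M(\phi(v_0,\cdot),\xi_j)\curlyeqprec(\beta_{v_0,\xi_j},p_{v_0,\xi_j})$, and invoke \eqref{equ-uniform} with the admissible perturbation $P(y)=(v-v_0)\cdot y$. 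Summing over $j$ and $k$ and taking $\beta_0$ to be the maximum of the finitely many $\beta_{v_0^{(k)},\xi_j}$, and $p_0$ to be the maximum of the corresponding $p_{v_0^{(k)},\xi_j}$ among those achieving $\beta_0$, yields the announced uniform bound.

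The technical core is not any individual step but the careful bookkeeping: the neighborhood $A$ and the smallness threshold $\epsilon$ in Definition \ref{def-uniform estimate} depend on the basepoint $(v_0,\xi)$, so one must pass through compactness twice, first making $\epsilon$ uniform in $\xi$ for each fixed $v_0$ and only then covering $\mathcal{V}$, before the linear perturbation $(v-v_0)\cdot y$ can be safely absorbed into $\mathcal{H}_r(\epsilon)$. This is also the reason the final $(\beta_0,p_0)$ produced by the lemma is controlled by the \emph{worst} pointwise exponent pair $(\beta_{v_0,\xi},p_{v_0,\xi})$ that arises from the later dimension-by-dimension analysis in Section \ref{sec-proof}.
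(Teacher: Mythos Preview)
Your proposal is correct and follows essentially the same approach as the paper: a two-layer compactness argument (first over $\mathcal{U}$ for fixed $v_0$, then over $\mathcal{V}$), combined with a partition of unity and the observation that the linear drift $(v-v_0)\cdot\xi$ is an admissible perturbation in $\mathcal{H}_r(\epsilon)$. You are slightly more explicit than the paper in verifying that $P(y)=(v-v_0)\cdot y$ actually lies in $\mathcal{H}_r(\epsilon)$ for $|v-v_0|$ small, which the paper leaves implicit (it is hinted at in the introduction with the identity $\phi(v,\xi)=(v-v_0)\cdot\xi+\phi(v_0,\xi)$ but not spelled out in the proof itself).
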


\begin{proof}
    We first fix $v_0 \in \mathcal{V}$. By \eqref{equ-Mxi} and Definition \ref{def-uniform estimate}, there exist $\epsilon_{v_0,\xi}>0$ and neighborhood $\mathbf{m}_{v_0,\xi}$ of $\xi$ such that \eqref{equ-uniform} holds. Therefore, the compact set $\mathcal{U}\subset\cup_{\xi\in\mathcal{U}}\mathbf{m}_{v_0,\xi}$ and we can choose finite sets $\{\mathbf{m}_{v_0,\xi_j}, j=1,2,\cdots,N_0\}$ to cover it. By a partition of unity, there are nonnegative functions $\{\varphi_j, \, j=1,2,\cdots,N_0\}$ such that
    \begin{equation*}
        \varphi_j \in C_0^{\infty}(\mathbf{m}_{v_0,\xi_j})\quad \text{and}\quad \sum_{j=1}^{N_0} \varphi_j  \equiv 1\;\;\mbox{on}\;\; \mathcal{U}.
    \end{equation*}
    We insert these functions into $I$:
    \begin{equation*}
       I(v,t) = \int_{\R^2} e^{it\phi}\;\eta_0 = \sum_{j=1}^{N_0} \int_{\R^2} e^{it\phi}\;\eta_0 \varphi_j:=\sum_{j=1}^{N_0}I^{j}(v,t).
    \end{equation*}
    Again by \eqref{equ-Mxi}, we have $M(\phi(v_0,\cdot),\xi_j) \curlyeqprec (\beta_{v_0,\xi_j},p_{v_0,\xi_j})$. Thus
    \begin{equation*}
    |I^j(v,t)| \leqslant C_j(1+|t|)^{\beta_{v_0,\xi_j}}\log^{p_{v_0,\xi_j}}(2+|t|)
    \end{equation*}
    holds uniformly for all $v$ such that $|v-v_0|<\epsilon_{v_0,j}$. Consequently, adding these inequalities together gives local estimate for $I$ near $v_0$, with index $(\beta_{v_0},p_{v_0})=\max_j\{(\beta_{v_0,\xi_j},p_{v_0,\xi_j})\}$ in the lexicographic order and $C_{v_0}=\max_{j}C_j$.

    Now, for each $v_0 \in \mathcal{V}$ we have obtained a uniform estimate of $I(v,t)$ in a small neighborhood of $v_0$, i.e. $|v-v_0| \leq \epsilon_{v_0}:= \min_{1\leq j\leq N_0}\epsilon_{v_0,j}$, then we finish the proof by a finite covering on this compact region.   
\end{proof}

In conclusion, the left part is to prove \eqref{equ-Mxi} for all $(v_0,\xi) \in \mathcal{V} \times \mathcal{U}$. Then Theorem \ref{theorem-main conclusion} is proved by Lemma \ref{lem-outside decay any faster} and \ref{lemma-inside the cone} when $d=2$.

To start with, we naturally fix some $v_0$, and classify all the $\xi$ into three parts: 
regular points (i.e. $\nabla_{\xi}\phi(v_0,\xi)\neq 0$),  nondegenerate critical points and degenerate critical points of $\phi(v_0,\cdot)$. For the first two cases, we have the following estimate.
\begin{lemma}\label{lemma-regular and nondegenerate estimate}
    Suppose that $h:\R^d \rightarrow \R$ is real analytic at some point $x_0$.
    \begin{itemize}
        \item [(a)] If $\nabla h(x_0) \neq 0$, then $M(h,x_0) \curlyeqprec (-N,0)$ for any $N \in \N$.
        \item [(b)] If $x_0$ is a nondegenerate critical point of $h$, then $M(h,x_0) \curlyeqprec (-d/2,0)$.
    \end{itemize}
\end{lemma}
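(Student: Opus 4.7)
The plan is to translate $x_0$ to the origin and work with the shifted phase $h_{x_0}(x)=h(x+x_0)$. The common uniformity tool for both parts is Cauchy's estimate on the polydisc: any $P\in\mathcal{H}_r(\epsilon)$ satisfies $\sup_{\overline{B}_{\R^d}(0,r/2)}|\partial^{\alpha}P|\le C_{\alpha,r}\,\epsilon$ for every multi-index $\alpha$, so $P$ and all of its derivatives are as small as desired on a slightly shrunken ball once $\epsilon$ is chosen small. Hence $\Phi:=h_{x_0}+P$ is a mild smooth perturbation of $h_{x_0}$ with uniform control of all derivatives over $P\in\mathcal{H}_r(\epsilon)$.

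For part (a), since $\nabla h_{x_0}(0)\ne 0$, continuity gives $r>0$ and $c>0$ with $|\nabla h_{x_0}|\ge 2c$ on $\overline{B}_{\R^d}(0,r)$; choosing $\epsilon$ small then forces $|\nabla\Phi|\ge c$ on $A:=B_{\R^d}(0,r/2)$. I would integrate by parts $N$ times with the first-order operator $L^{*}\psi:=(it)^{-1}\nabla\!\cdot\!\bigl(\psi\,\nabla\Phi/|\nabla\Phi|^2\bigr)$, which satisfies $\int e^{it\Phi}\psi\,dx=\int e^{it\Phi}(L^*)^N\psi\,dx$. Each iteration gains a factor $t^{-1}$ and one derivative on $\psi$; all coefficient terms (derivatives of $\Phi$ up to order $N+1$ and powers of $|\nabla\Phi|^{-1}$) are bounded uniformly in $P$ by the Cauchy estimates. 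Combined with the trivial bound $|\int e^{it\Phi}\psi\,dx|\le\mathrm{vol}(A)\|\psi\|_{C^0}$ for $|t|\le 1$, this gives the $(1+|t|)^{-N}$ decay for every $N$.

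For part (b), the Hessian $H_0:=\mathrm{Hess}\,h_{x_0}(0)$ is invertible, so for $\epsilon$ small $\mathrm{Hess}\,\Phi$ remains invertible and close to $H_0$ near the origin. The implicit function theorem applied to $\nabla\Phi=0$ with parameter $P$ produces a unique critical point $x^{*}(P)\in B(0,r/2)$ depending smoothly on $P$ with $x^{*}(0)=0$. A parameter version of the Morse lemma then supplies a smooth change of coordinates $y=\Psi(x,P)$ with $\Psi(x^{*}(P),P)=0$ and $C^N$ norms bounded uniformly in $P$, under which $\Phi(x)-\Phi(x^{*}(P))=\tfrac12 y^{\top} H_0\, y$ (after an orthogonal reduction putting $H_0$ in diagonal form, the right-hand side is $\tfrac12\sum_j\lambda_j y_j^2$). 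The integral reduces to $e^{itc(P)}\int e^{(it/2)y^{\top}H_0 y}\widetilde\psi(y)\,dy$, which the classical Gaussian stationary phase estimate bounds by $Ct^{-d/2}\|\widetilde\psi\|_{C^N}\lesssim t^{-d/2}\|\psi\|_{C^N(A)}$.

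The principal obstacle is the parameter-uniform Morse normal form: one must verify that the diffeomorphism $\Psi(\cdot,P)$ and its inverse can be chosen with $C^N$ norms bounded independently of $P\in\mathcal{H}_r(\epsilon)$. This is handled by applying the implicit function theorem on the Banach space of holomorphic functions on $\overline{B}_{\C^d}(0,r)$ with the sup norm and then using Cauchy's estimates to convert the resulting control into uniform bounds on real derivatives; invertibility of $H_0$ ensures that the relevant linearization remains an isomorphism throughout, so no degeneration occurs as $P$ ranges over $\mathcal{H}_r(\epsilon)$.
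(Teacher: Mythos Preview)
Your proposal is correct and follows essentially the same approach as the paper's: integration by parts for (a) and the stationary phase/Morse-lemma argument for (b). The paper's proof is only a two-line sketch (``integrating by parts'' and ``in the spirit of the stationary phase method''), so your write-up is simply a fleshed-out version of that sketch, with appropriate attention to the uniformity in $P\in\mathcal{H}_r(\epsilon)$ via Cauchy's estimates. One minor point: the parameter-dependent Morse lemma naturally yields $\Phi(x)-\Phi(x^{*}(P))=\tfrac12\,y^{\top}\mathrm{Hess}\,\Phi(x^{*}(P))\,y$ rather than with $H_0$; but since $\mathrm{Hess}\,\Phi(x^{*}(P))$ is uniformly close to $H_0$ and uniformly invertible, the Gaussian stationary phase bound still holds with constants independent of $P$, so this does not affect your conclusion.
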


\begin{proof}
    The first assertion can be proved directly through integrating by parts, while the second one can be handled in the spirit of the stationary phase method.
\end{proof}

Finally, we need to establish \eqref{equ-Mxi} for all $\xi_0\in\Sigma=\cup_{j=1}^{d}\Sigma_j$. With Lemma \ref{lem-characterization of degenerate}, we find different cases will appear as the parameter $\gamma$ varies. Thus, we will analyze each possibility one-by-one. Before the proof we give some notions:

\begin{itemize}
    \item [(N1)] The constants $\tilde{c}$ may vary from one line to another or disappear directly. This is reasonable because they contribute nothing to the decay.
    \item [(N2)] In the Taylor expansion for certain phase function $S(x)$, we use $N(x)$ to denote the negligible terms in the sense that $S$ and $S-N$ share the same Newton polyhedron, i.e. $\mathcal{N}(S) = \mathcal{N}(S-N)$.
    \item [(N3)] We will use $\xi_0 = (\xi_1^*,\xi_2^*)$ and the abbreviations below unless specified otherwise. For $j = 1,2$, $c_j: = \cos(\xi_j),\,s_j := \sin(\xi_j),\,t_j:=\tan(\xi_j), c_j^*: = \cos(\xi_j^*),\,s_j^* := \sin(\xi_j^*),\,t_j^*:=\tan(\xi_j^*)$.
\end{itemize}


\subsection{$\gamma \in (-\infty,-16) \cup (0,\infty)$}

We split the proof into two parts.

\begin{theorem}\label{theorem-xi = pi/2}
    Suppose that $\gamma \in \R\backslash \{-8\}$, $\xi_0 = \left(\frac{\pi}{2},\frac{\pi}{2}\right)$ and $v_0=-\nabla \varphi(\xi_0)=(16+2\gamma)(-1,-1)$. Then
    \begin{equation*}
        M(\phi(v_0,\cdot),\xi_0) \curlyeqprec (-3/4,0).
    \end{equation*}
\end{theorem}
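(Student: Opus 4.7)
The plan is to bring $\phi(v_0,\cdot)$ into an adapted coordinate system at $\xi_0$, read off Newton distance $4/3$ with multiplicity $\nu=0$, and then invoke Theorem \ref{theorem-2dim NP uniform}. First I translate so that $\xi_0$ sits at the origin via $\xi_j = \pi/2 + x_j$, giving $\cos\xi_j = -\sin x_j$ and $\omega^2 = 4 + 2(\sin x_1 + \sin x_2)$, so that
\[
\omega^4 + \gamma\omega^2 = (16+4\gamma) + (16+2\gamma)(\sin x_1 + \sin x_2) + 4(\sin x_1 + \sin x_2)^2.
\]
The choice $v_0 = -(16+2\gamma)(1,1)$ is precisely what makes $v_0 \cdot x$ cancel the linear piece of $(16+2\gamma)(\sin x_1 + \sin x_2)$. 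Dropping the additive constant, the Taylor expansion reads
\[
\tilde\phi(x) = 4(x_1+x_2)^2 - \tfrac{16+2\gamma}{6}(x_1^3+x_2^3) + N(x),
\]
where $N$ collects terms of total $x$-degree $\geq 4$.

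Since $\mathrm{Hess}\,\tilde\phi(0) = 8\begin{pmatrix}1 & 1\\ 1 & 1\end{pmatrix}$ has rank $1$ with kernel along $(1,-1)$, I next apply the linear (hence analytic) change of coordinates $y_1 = x_1+x_2$, $y_2 = x_1-x_2$. Using $x_1^3 + x_2^3 = \tfrac14(y_1^3 + 3 y_1 y_2^2)$ together with analogous identities for higher monomials, the phase becomes
\[
\tilde\phi(y) = 4y_1^2 - \tfrac{16+2\gamma}{24}\, y_1^3 - \tfrac{16+2\gamma}{8}\, y_1 y_2^2 + \widetilde{N}(y),
\]
where $\widetilde{N}$ contributes only Taylor exponents $(a,b)$ with $2a+b \geq 4$. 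Because $\gamma \neq -8$, the coefficient of $y_1 y_2^2$ is nonzero, so $(1,2)$ lies in the Taylor support. The lower-left boundary of $\mathcal{N}(\tilde\phi)$ is therefore the compact edge from $(2,0)$ to $(1,2)$, lying on the line $2a+b=4$, which meets the bisectrix at $(4/3,4/3)$. Hence $d_{S,y} = 4/3$ and the principal face $\pi(\tilde\phi)$ is this edge, with principal part
\[
S_{pr}(y) = 4y_1^2 - \tfrac{16+2\gamma}{8}\, y_1 y_2^2 = y_1\!\left(4y_1 - \tfrac{16+2\gamma}{8}\, y_2^2\right).
\]

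It remains to check that $(y_1,y_2)$ is adapted and that $\nu(\tilde\phi) = 0$. The zero set of $S_{pr}$ on $\mathbb{S}^1$ consists of $(0,\pm 1)$ together with the finite set where $4y_1 = \tfrac{16+2\gamma}{8}y_2^2$ meets the unit circle (a genuine quadratic in $y_2^2$ since $\gamma\neq -8$). At $(0,\pm 1)$ one finds $\partial_1 S_{pr} = -(16+2\gamma)/8 \neq 0$; at a parabola point substitution gives $\partial_1 S_{pr} = 4y_1 \neq 0$ (since $y_1 = 0$ would force $y_2 = 0$). Hence $m(S_{pr}) = 1 \leq 4/3 = d_{S,y}$, so Theorem \ref{theorem-jugde adapted coordinate}(a) confirms that $(y_1,y_2)$ is adapted and $ht_S = 4/3$. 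Since $m(S_{pr}) < d_{S,y}$, Theorem \ref{theorem-judge nu} then forces $\nu(\tilde\phi) = 0$, and Theorem \ref{theorem-2dim NP uniform} yields $M(\phi(v_0,\cdot), \xi_0) \curlyeqprec (-3/4, 0)$, as desired. The main delicate point is the nonvanishing of the $y_1 y_2^2$ coefficient, which is exactly the content of the hypothesis $\gamma \neq -8$: if this coefficient were zero the vertex $(1,2)$ would leave the Newton diagram, the quartic terms in $\widetilde{N}$ (which a direct computation shows contribute $-\tfrac13 y_1^4 - y_1^2 y_2^2$) would take over, and both Newton distance and multiplicity would worsen, producing the slower decay recorded for $\gamma = -8$ in Theorem \ref{theorem-main conclusion}(b).
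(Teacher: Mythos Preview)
Your proof is correct and follows essentially the same route as the paper: after translating $\xi_0$ to the origin you pass to the coordinates $y_1=x_1+x_2,\ y_2=x_1-x_2$ (the paper uses $u_j=y_j/2$ in one step), obtain the principal part $4y_1^2-\tfrac{8+\gamma}{4}y_1y_2^2$, and verify adaptedness via $m(S_{pr})=1<4/3$ before invoking Theorems \ref{theorem-2 dim newton polyhedra} and \ref{theorem-2dim NP uniform}. One small point worth making explicit is that $\tilde\phi(0,y_2)$ is constant (since $\sin x_1+\sin x_2=2\sin(y_1/2)\cos(y_2/2)$ vanishes at $y_1=0$), which rules out any pure $y_2^k$ monomials and confirms your description of the Newton boundary; also note that $\nu(\tilde\phi)=0$ already follows from $ht_S=4/3<2$.
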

\begin{proof}
    The phase function $\phi(\xi) = \omega^4+\gamma \omega^2-(16+2\gamma)\xi_1-(16+2\gamma)\xi_2$. We apply a linear transform $2u_1=\xi_1+\xi_2-\pi, 2u_2=\xi_1-\xi_2$ to obtain $\phi(u_1,u_2) = (4+4\sin u_1 \cos u_2)(4+\gamma+4\sin u_1 \cos u_2)-4(8+\gamma)u_1+\tilde{c}$. The Taylor expansion of $\phi$ at $(0,0)$ is given by
    \begin{equation*}
        \begin{aligned}
            \phi(u) & = 16\left[\frac{8+\gamma}{4} u_1 + u_1^2 -\frac{1}{2!}\frac{8+\gamma}{4}u_1u_2^2-\frac{1}{3!}\frac{8+\gamma}{4}u_1^3 + o(|u|^3)\right]-4(8+\gamma)u_1+\tilde{c} \\
            & = 16u_1^2-2(8+\gamma)u_1u_2^2+N(u).
        \end{aligned}
    \end{equation*}
    We point out that all the terms without $u_1$, i.e. $u_2^k$, do not appear in the expansion. Therefore, the Newton polyhedron is
    \begin{equation*}
        \mathcal{N}(\phi) = \{(\lambda+1,2-2\lambda):\lambda \in [0,1]\}+\R^2_+.
    \end{equation*}
    The principal face $\pi(\phi)$ is a compact edge. By Theorem \ref{theorem-jugde adapted coordinate}, the coordinate system $\{u_1,u_2\}$ is adapted. In fact, one may verify that $\phi_{pr} = 16u_1^2-2(8+\gamma)u_1u_2^2$ and $m(\phi_{pr}) = 1 < \frac{4}{3} = d_{\phi,u}$. Consequently, a combination of Theorem \ref{theorem-2 dim newton polyhedra} and \ref{theorem-2dim NP uniform} implies the conclusion.
\end{proof}

On the other hand, the rest situation is $\xi_0 \in E_{\gamma}$.

\begin{theorem}\label{theorem-xi in E gamma}
    Suppose that $\gamma \in \R\backslash \{-8\}$, $\xi_0 \in E_{\gamma}$ and $v_0 = -\nabla \varphi(\xi_0)$. Then 
    \begin{equation*}
        M(\phi(v_0,\cdot),\xi_0) \curlyeqprec (-5/6,0).
    \end{equation*}
\end{theorem}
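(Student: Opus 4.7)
The plan is to put $\phi(v_0,\cdot)$ into an adapted local normal form at $\xi_0$ and then invoke the Newton polyhedron machinery of Section \ref{ssec-2dim OI}. Translating via $u:=\xi-\xi_0$, the critical-point relation $v_0=-\nabla\varphi(\xi_0)$ kills the linear Taylor term of $\phi(v_0,\xi_0+u)$, and the quadratic part equals $\tfrac12 u^T H u$ with $H:=\mathrm{Hess}\,\varphi(\xi_0)$. Since $\xi_0\in E_\gamma$, we have $\det H=0$; a short case check using the formula for $Det(\xi)$ from the proof of Lemma \ref{lem-characterization of degenerate}, together with $\gamma\neq -8$, shows $\alpha:=4\omega(\xi_0)^2+2\gamma\neq 0$ and $c_j^*\neq 0$, so $H$ has rank exactly one with single nonzero eigenvalue $\lambda=H_{11}+H_{22}\neq 0$. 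Choose $\tilde p\in\ker H\setminus\{0\}$. A rotation $u=Rv$ placing the $v_2$-axis along $\tilde p$ then transforms the phase into
\[
    \phi = \tfrac{\lambda}{2} v_1^2 + A_1 v_1^3 + A_2 v_1^2 v_2 + A_3 v_1 v_2^2 + A\, v_2^3 + O(|v|^4),\qquad A = \frac{D_{\tilde p}^3 \phi(\xi_0)}{6|\tilde p|^3}.
\]

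If $A\neq 0$, the monomials $v_1^3,v_1^2v_2,v_1v_2^2$ all correspond to Taylor-support points strictly above the line $3x+2y=6$ in $\mathcal{N}(\phi)$, so the principal face $\pi(\phi)$ is the compact edge from $(2,0)$ to $(0,3)$; consequently $d_{\phi,v}=6/5$ and $\phi_{pr}=\tfrac\lambda 2 v_1^2 + A v_2^3$. Its gradient $\nabla\phi_{pr}=(\lambda v_1, 3A v_2^2)$ does not vanish on the unit circle, so $m(\phi_{pr})=1<6/5$ and Theorem \ref{theorem-jugde adapted coordinate}(a) shows the $v$-coordinates are adapted with $\nu(\phi)=0$. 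Combining Theorems \ref{theorem-2 dim newton polyhedra} and \ref{theorem-2dim NP uniform} then yields the desired $M(\phi(v_0,\cdot),\xi_0)\curlyeqprec (-5/6,0)$.

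The main obstacle is verifying $A\neq 0$. Solving $H\tilde p=0$ and using the $E_\gamma$-relation $\alpha c_1^* c_2^*+8(s_1^{*2} c_2^*+s_2^{*2}c_1^*)=0$ to clean up, one may take $\tilde p=(s_1^* c_2^*,\; s_2^* c_1^*)$. Contracting the explicit third derivatives
\[
    \partial_{ijk}\phi(v_0,\xi_0) = 8\bigl[\delta_{ik}c_i^* s_j^* + \delta_{jk}c_j^* s_i^* + \delta_{ij}c_i^* s_k^*\bigr] - \alpha\,\delta_{ij}\delta_{ik}\,s_i^*
\]
with $\tilde p$ and once more eliminating $\alpha$ via the $E_\gamma$-relation, I expect the identity
\[
    D_{\tilde p}^3\phi(\xi_0) = \frac{8(s_1^{*2} c_2^* + s_2^{*2} c_1^*)}{c_1^* c_2^*}\,(c_1^* + c_2^*)\,f(c_1^*, c_2^*),\qquad f(x,y):=x^2-xy+y^2+x^2 y^2-2x^3 y^3.
\]
Vanishing of the first factor would force $\alpha=0$; vanishing of the second combined with the $E_\gamma$ equation forces $\gamma=-8$; and an elementary bound via $x^2-xy+y^2\ge |xy|$ shows that on $([-1,1]\setminus\{0\})^2$ the only zeros of $f$ are $(1,1)$ and $(-1,-1)$, corresponding to $\xi_0\in\{(0,0),(\pi,\pi)\}$---both corank-$2$ critical points arising only at the excluded parameters $\gamma\in\{0,-16\}$ and handled by separate theorems. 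The algebraic simplification producing the clean three-factor factorization above is the most delicate piece of the argument.
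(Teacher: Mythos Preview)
Your approach is essentially identical to the paper's: rotate so that one coordinate lies along $\ker H$, observe that the phase becomes $a_2 y_2^2 + a_1 y_1^3 + N(y)$ (in the paper's labelling, with $y_1$ in the kernel direction), verify that both coefficients are nonzero, and then read off $d_\phi=6/5$, $\nu(\phi)=0$ from the resulting Newton polygon via Theorems \ref{theorem-jugde adapted coordinate}, \ref{theorem-2 dim newton polyhedra}, \ref{theorem-2dim NP uniform}. The paper uses the kernel vector $(t_1^*,t_2^*)$, which is just your $\tilde p=(s_1^*c_2^*,s_2^*c_1^*)$ rescaled by $c_1^*c_2^*$, so the cubic coefficients differ only by the factor $(c_1^*c_2^*)^3$.

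The one substantive difference is the verification that the cubic coefficient does not vanish. The paper writes its $a_1$ in the form
\[
6a_1=\frac{8}{c_1^*c_2^*}(c_1^*+c_2^*)(1-c_1^*c_2^*)\Bigl(\tfrac{1-c_1^{*2}}{c_1^*}(t_1^{*2}+3)+\tfrac{1-c_2^{*2}}{c_2^*}(t_2^{*2}+3)\Bigr)
\]
and then argues by contradiction: if the last bracket vanished, combining with the $E_\gamma$ relation forces $(1/c_1^*)^3+(1/c_2^*)^3=(8+\gamma)/4$, which a sign analysis rules out. Your route packages the same quantity as $(c_1^*+c_2^*)f(c_1^*,c_2^*)$ with $f(x,y)=x^2-xy+y^2+x^2y^2-2x^3y^3$ and shows $f>0$ on $([-1,1]\setminus\{0\})^2$ except at $(\pm1,\pm1)$ via the elementary bound $x^2-xy+y^2\ge|xy|$. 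This is a cleaner positivity argument and makes the exclusion of the corank-$2$ points $(0,0),(\pi,\pi)$ explicit; the paper handles the same exclusion by imposing $s_1^{*2}+s_2^{*2}\neq 0$ at the outset (Remark \ref{remark-complement for E gamma}). One small imprecision: your opening claim ``$\alpha\neq 0$'' is not literally true for all $\xi_0\in E_\gamma$ when $\gamma\in\{0,-16\}$---it fails exactly at those corank-$2$ points---so you should either state the exclusion up front as the paper does, or note that $\alpha=0$ on $E_\gamma$ with $\gamma\neq-8$ forces $c_1^*c_2^*=1$, hence $\xi_0\in\{(0,0),(\pi,\pi)\}$.
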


\begin{proof}
    In this case we have $\xi_0=(\xi_1^*,\xi_2^*)\neq \left(\frac{\pi}{2},\frac{\pi}{2}\right)$, i.e. $c_1^*\neq0, c_2^*\neq0$, such that 
    \begin{equation}\label{equ-inproof2}
        8(c_1^*+c_2^*) - 4\left(\frac{1}{c_1^*}+ \frac{1}{c_2^*}\right) =8+\gamma= \frac{4(c_1^*+c_2^*)}{c_1^*c_2^*}(2c_1^*c_2^*-1).
    \end{equation}

    In this Theorem we assume additionally that ${s_1^*}^2+{s_2^*}^2 \neq 0$, see Remark \ref{remark-complement for E gamma}.

    We then look for a proper change of coordinates. The Hessian matrix of $\phi(v_0,\cdot)$ at $\xi_0$ is
    \begin{equation*}
        \frac{64}{c_1^*c_2^*}
        \begin{pmatrix}
            -c_1^*{s_2^*}^2 & s_1^*s_2^*c_2^* \\
            s_1^*s_2^*c_1^* & -{s_1^*}^2c_2^*
        \end{pmatrix}.
    \end{equation*}
    It has rank one, and hence has zero eigenvalue of multiplicity one whose eigenvector can be chosen as $(t_1^*,t_2^*)^T$. Inspired by this, we use the change of variables
    \begin{equation*}
        \begin{pmatrix}
            \xi_1 \\
            \xi_2
        \end{pmatrix}
        =
        \begin{pmatrix}
            t_1^* & -t_2^* \\
            t_2^* & t_1^*
        \end{pmatrix}
        \begin{pmatrix}
            y_1 \\
            y_2
        \end{pmatrix}
        +
        \begin{pmatrix}
            \xi_1^* \\
            \xi_2^*
        \end{pmatrix}.
    \end{equation*}
    After some calculation, we write 
    \begin{equation*}
        \phi(v_0,y) = \tilde{c} + a_1y_1^3 + a_2y_2^2 + N(y).
    \end{equation*}
    In the expansion we have
    \begin{equation*}
        \begin{aligned}
            6 a_1 & = -2(8+\gamma-4c_1^*-4c_2^*)(s_1^*{t_1^*}^3+s_2^*{t_2^*}^3)+24(s_1^*t_1^*+s_2^*t_2^*)^2 \\
            & = 8(s_1^*t_1^*+s_2^*t_2^*)(s_1^*{t_1^*}^3+s_2^*{t_2^*}^3+3s_1^*t_1^*+3s_2^*t_2^*) \\
            & = \frac{8}{c_1^*c_2^*}(c_1^*+c_2^*)(1-c_1^*c_2^*)\left(\frac{1-{c_1^*}^2}{c_1^*}({t_1^*}^2+3)+\frac{1-{c_2^*}^2}{c_2^*}({t_2^*}^2+3)\right)
        \end{aligned}
    \end{equation*}
    and 
    \begin{equation*}
        \begin{aligned}
            2a_2  &= 2(8+\gamma-4c_1^*-4c_2^*)(c_1^*{t_2^*}^2+c_2{t_1^*}^2)+8(-s_1^*t_2^*+s_2^*t_1^*)^2 \\
            &= -8(s_1^*t_1^*+s_2^*t_2^*)(c_1^*{t_2^*}^2+c_2^*{t_1^*}^2)+8(-s_1^*t_2^*+s_2^*t_1^*)^2 \\
            & = -8c_1^*c_2^*({t_1^*}^2+{t_2^*}^2)^2.
        \end{aligned}
    \end{equation*}
    We chaim that both $a_1$ and $a_2$ are nonvanishing given \eqref{equ-inproof2} and ${s_1^*}^2+{s_2^*}^2 \neq 0$. It is direct to see $a_2 \neq 0$ and the restriction $\gamma \neq -8$ ensures that $c_1^*+c_2^* \neq 0$. To verify that the system of equations \eqref{equ-inproof2} and 
    \begin{equation}\label{equ-inproof3}
        \frac{1-{c_1^*}^2}{c_1^*}({t_1^*}^2+3)+\frac{1-{c_2^*}^2}{c_2^*}({t_2^*}^2+3)=0=-2(c_1^*+c_2^*) + \frac{1}{c_1^*}+ \frac{1}{c_2^*} + \left(\frac{1}{{c_1^*}}\right)^3+\left(\frac{1}{{c_2^*}}\right)^3
    \end{equation} 
    has no solutions, we first observe that $c_1^*$ and $c_2^*$ have opposite signs by \eqref{equ-inproof3}. Then we insert \eqref{equ-inproof2} into \eqref{equ-inproof3} to get
    \begin{equation*}
        \left(\frac{1}{{c_1^*}}\right)^3+\left(\frac{1}{{c_2^*}}\right)^3 = \frac{8+\gamma}{4}.
    \end{equation*}
    If $\gamma>-8$, since the signs of $c_1^*$ and $c_2^*$ are different, we may assume $-c_2^* > c_1^* >  0$ without loss of generality. Thus, we have
    \begin{equation*}
        0< 8+\gamma = 8(c_1^*+c_2^*) - 4\left(\frac{1}{c_1^*}+ \frac{1}{c_2^*}\right)  < 0.
    \end{equation*}
    This contradiction implies the chaim above is valid. The situation for $\gamma < -8$ is similar.
    
    Therefore, the Newton polyhedron is 
    \begin{equation*}
        \mathcal{N}(\phi) = \{(3\lambda,2-2\lambda):\lambda \in [0,1]\} + \R^2_+.
    \end{equation*}
    In this case $\phi_{pr} = a_1y_1^3+a_2y_2^2$, $m(\phi_{pr}) \leq 1 < \frac{6}{5} = d_{\phi,y}$ and hence $\{y_1,y_2\}$ is adapted. Then Theorem \ref{theorem-2 dim newton polyhedra} and \ref{theorem-2dim NP uniform} can be applied.
\end{proof}

\begin{remark}\label{remark-complement for E gamma}
    If $\gamma \in \R \backslash \{-16,-8,0\}$, ${s_1^*}^2+{s_2^*}^2 \neq 0$ is valid for any $\xi_0 \in E_{\gamma}$. If $\gamma \in \{-16,-8,0\}$, there are only four exceptions which will be discussed later. See Section \ref{ssec-0and-16} for $\xi_0=(0,0) (\gamma=0)$ and $\xi_0=(\pi,\pi) (\gamma=-16)$. See Section \ref{ssec--8} for $\xi_0 \in \{(0,\pi),(\pi,0)\} (\gamma=-8)$.
\end{remark}

Then we apply Theorem \ref{theorem-xi = pi/2}, \ref{theorem-xi in E gamma} and Lemma \ref{lemma-regular and nondegenerate estimate} to finish the proof.

\subsection{$\gamma \in (-16,-8) \cup (-8,0)$}

Note that Theorem \ref{theorem-xi = pi/2} and \ref{theorem-xi in E gamma} can still be utilized if $\xi_0$ belongs to the corresponding set. We need to investigate the case when $\xi_0 \in D_{\gamma}$.

\begin{theorem}\label{theorem-xi in D gamma}
    Suppose that $\gamma \in (-16,-8) \cup (-8,0)$, $\xi_0 \in D_{\gamma}$ and $v_0 = -\nabla \varphi(\xi_0)$. Then 
    \begin{equation*}
        M(\phi(v_0,\cdot),\xi_0) \curlyeqprec (-5/6,0).
    \end{equation*}
\end{theorem}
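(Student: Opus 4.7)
The plan is to follow the strategy of Theorem~\ref{theorem-xi in E gamma}, adapted to the fact that $\nabla\varphi$ vanishes on $D_\gamma$. Since $\xi_0\in D_\gamma$ satisfies $2\omega^2(\xi_0)+\gamma=0$, and hence $\nabla\varphi(\xi_0)=(2\omega^2+\gamma)\nabla\omega^2|_{\xi_0}=0$, we have $v_0=0$ and the phase reduces to $\phi(v_0,\xi)=\varphi(\xi)=G(\xi)^2-\gamma^2/4$ where $G:=\omega^2+\gamma/2$. The parameter range $\gamma\in(-16,-8)\cup(-8,0)$ excludes the corner points $\{0,\pi\}^2$ from $D_\gamma$, so $\nabla G(\xi_0)=2(s_1^*,s_2^*)\neq 0$, and the Hessian $\mathrm{Hess}_\xi\phi(v_0,\xi_0)=8(s_1^*,s_2^*)^T(s_1^*,s_2^*)$ has rank one, with null eigenvector $(-s_2^*,s_1^*)^T$.

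Mirroring the argument of Theorem~\ref{theorem-xi in E gamma}, I would perform the linear change of variables
\begin{equation*}
\begin{pmatrix}\xi_1\\ \xi_2\end{pmatrix}=\begin{pmatrix}-s_2^* & s_1^*\\ s_1^* & s_2^*\end{pmatrix}\begin{pmatrix}y_1\\ y_2\end{pmatrix}+\begin{pmatrix}\xi_1^*\\ \xi_2^*\end{pmatrix},
\end{equation*}
so that $y_1$ lies along the null direction and $y_2$ along the nondegenerate eigendirection. A direct Taylor expansion gives
\begin{equation*}
G(y)=2r^2 y_2+B_{11}y_1^2+B_{12}y_1 y_2+B_{22}y_2^2+O(|y|^3),
\end{equation*}
with $r^2=(s_1^*)^2+(s_2^*)^2$ and $B_{11}=c_1^*(s_2^*)^2+c_2^*(s_1^*)^2$. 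Since $\phi=G^2-\gamma^2/4$ is a perfect square, its quadratic principal part in $(y_1,y_2)$ coordinates is $(2r^2 y_2+B_{11}y_1^2)^2$, which is $\R$-degenerate along the parabola $y_2=-B_{11}y_1^2/(2r^2)$, so these coordinates are not adapted. I would eliminate this degenerate direction with the polynomial change $z_1=y_1$, $z_2=y_2+B_{11}y_1^2/(2r^2)$, and apply a second change of the same type to absorb the next-order (cubic) degeneracy that emerges, ultimately reaching the cubic-quadratic normal form
\begin{equation*}
\phi(v_0,\tilde z)-\tilde c = a_1 \tilde z_1^3 + a_2 \tilde z_2^2 + N(\tilde z),
\end{equation*}
with $a_1,a_2\neq 0$ verifiable from the trigonometric data at $\xi_0$.

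In these final coordinates, the Newton polyhedron has principal face on the compact edge joining $(3,0)$ to $(0,2)$, with $d_{\phi,\tilde z}=6/5$, and $\phi_{pr}=a_1\tilde z_1^3+a_2\tilde z_2^2$ satisfies $m(\phi_{pr})\leq 1<6/5$, so Theorem~\ref{theorem-jugde adapted coordinate}(a) certifies adaptedness. A combination of Theorems~\ref{theorem-2 dim newton polyhedra} and~\ref{theorem-2dim NP uniform} then yields $M(\phi(v_0,\cdot),\xi_0)\curlyeqprec(-5/6,0)$. The main obstacle will be the explicit coefficient bookkeeping through the successive polynomial coordinate changes (necessitated by the perfect-square structure of $\varphi$, which is absent in Theorem~\ref{theorem-xi in E gamma}), together with verifying that no accidental vanishing on a sub-locus of $D_\gamma$ breaks the normalization — such loci would require separate treatment in the spirit of Remark~\ref{remark-complement for E gamma}.
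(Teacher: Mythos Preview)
Your outline mirrors the paper's route: rotate onto the null direction of the rank-one Hessian, recognize the principal part as the square $(b_1 z_1^2+2b_2 z_2)^2$, and then straighten the parabola to reach an $A_2$ normal form $a_1\tilde z_1^3+a_2\tilde z_2^2$ with Newton distance $6/5$. The paper does this with the single change $w_1=b_1z_1^2+2b_2z_2$, $w_2=z_1$ and asserts $\phi=w_1^2-\tfrac{4}{3} b_1^2 w_2^3+N(w)$; you propose an equivalent sequence of shears.

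Both arguments contain the same gap, and the asserted bound $(-5/6,0)$ is actually false. You correctly note that $v_0=0$ and that $\phi(0,\xi)+\gamma^2/4=G(\xi)^2$ holds \emph{exactly}, with $G=\omega^2+\gamma/2$ a submersion at $\xi_0$ since $\nabla G(\xi_0)=2(s_1^*,s_2^*)\neq0$ for $\gamma\in(-16,-8)\cup(-8,0)$. But the perfect square of a submersion can never be put in the form $a_1\tilde z_1^3+a_2\tilde z_2^2$: in any analytic coordinates the pure $\tilde z_1$-part of $G^2$ is $(G|_{\tilde z_2=0})^2$, whose lowest monomial has \emph{even} degree, so a Newton-polygon vertex at $(3,0)$ is impossible. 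Concretely, after the paper's change one has $G=w_1+h(w)$ with $h|_{w_1=0}=O(w_2^3)$, hence $G^2|_{w_1=0}=O(w_2^6)$; the claimed $-\tfrac{4}{3}b_1^2w_2^3$ term simply is not there. The correct adapted system is obtained by taking $u_1=G$ as a coordinate (implicit function theorem), giving $\phi-\tilde c=u_1^2$ identically: the principal face is the unbounded edge $\{2\}\times[0,\infty)$, $ht_\phi=2$, $\nu(\phi)=0$, and one-dimensional stationary phase in $u_1$ shows the integral is genuinely of order $|t|^{-1/2}$. Thus only $M(\phi(v_0,\cdot),\xi_0)\curlyeqprec(-1/2,0)$ is valid --- precisely what the paper itself obtains on $D_{-8}\setminus\{(0,\pi),(\pi,0)\}$ in Section~\ref{ssec--8}. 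The obstruction you anticipate is therefore not an ``accidental vanishing on a sub-locus'' but a structural one across all of $D_\gamma$, and it invalidates the target exponent rather than merely complicating the bookkeeping.
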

\begin{proof}
    In this case, the Hessian matrix of $\phi(v_0.\cdot)$ is
    \begin{equation*}
        64s_1^*s_2^*
        \begin{pmatrix}
            s_1^* & s_2^* \\
            s_1^* & s_2^*
        \end{pmatrix}.
    \end{equation*}
    Hence the eigenvector is chosen as $(s_2^*,-s_1^*)^T$ and the change of variables is $\xi_1 = s_2^* z_1 + s_1^*z_2 + \xi_1^*,\,\xi_2 = -s_1^*z_1+s_2^*z_2 + \xi_2^*$. The expansion becomes
    \begin{equation*}
        \phi(v_0,z) = (b_1z_1^2 + 2b_2z_2)^2 + N(z).
    \end{equation*}
    The constants $b_1 = {s_2^*}^2c_1^* + {s_1^*}^2c_2^*\,,\,b_2 = {s_1^*}^2+{s_2^*}^2$ are nonvanishing. To see this, we deduce from 
    \begin{equation*}
        0=2\omega(\xi_0)^2+\gamma = 8+\gamma -4(c_1^*+c_2^*)
    \end{equation*}
    that $c_1^*c_2^* \neq 1$ and $c_1^*+c_2^* \neq 0$, which implies $b_1 = (1-c_1^*c_2^*)(c_1^*+c_2^*)\neq 0$ and $b_2 \neq 0$.

    Unfortunately, the coordinate system $\{z_1,z_2\}$ is not adapted, since $\pi(\phi)$ is a compact edge but $m(\phi_{pr}) \geq 2 > \frac{4}{3} = d_{\phi,z}$. To overcome this difficulty, we apply $w_1 = b_1z_1^2+2b_2z_2,\,w_2 = z_1$ to obtain
    \begin{equation*}
        \phi(v_0,w) = w_1^2 - \frac{4}{3}b_1^2w_2^3+N(w)
    \end{equation*}
    The left analysis is the same as the proof of Theorem \ref{theorem-xi in E gamma}.
\end{proof}

Theorem \ref{theorem-xi = pi/2}, \ref{theorem-xi in E gamma}, \ref{theorem-xi in D gamma} and Lemma \ref{lemma-regular and nondegenerate estimate} imply a decay of $\mathcal{O}(|t|^{-3/4})$.

\subsection{$\gamma \in \{0,-16\}$} \label{ssec-0and-16}
When $\gamma=0$, we will encounter the Hessian matrix with vanishing rank for the first time. We focus on $\xi_0 \in D_0 = \{(0,0)\}$, while we can still use Theorem \ref{theorem-xi in E gamma} for the other $\xi_0 \in E_0 \backslash \{(0,0)\}$.

\begin{theorem}\label{theorem-xi is 0}
    Suppose that $\gamma=0$ and $\xi_0 = (0,0)$, then $M(\phi(0,\cdot),\xi_0) \curlyeqprec (-1/2,0)$.
\end{theorem}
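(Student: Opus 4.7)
The plan is to apply the Newton-polyhedron machinery of Theorems \ref{theorem-2 dim newton polyhedra} and \ref{theorem-2dim NP uniform} directly in the original coordinates $\{\xi_1,\xi_2\}$. Since $\gamma=0$, we have $v_0=-\nabla\varphi(0,0)=0$, and the phase reduces to $\phi(0,\xi)=\omega(\xi)^4$.

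I would first read off the Newton polyhedron. From $2-2\cos\xi_j=\xi_j^2-\xi_j^4/12+O(\xi_j^6)$ we obtain $\omega^2=\xi_1^2+\xi_2^2+O(|\xi|^4)$, hence
\begin{equation*}
\phi(0,\xi)=(\xi_1^2+\xi_2^2)^2+N(\xi)=\xi_1^4+2\xi_1^2\xi_2^2+\xi_2^4+N(\xi),
\end{equation*}
where every monomial in $N$ has total degree at least $6$ and therefore contributes nothing new to the polyhedron. The three leading Taylor points $(4,0),(2,2),(0,4)$ are collinear on $x_1+x_2=4$, so
\begin{equation*}
\mathcal{N}(\phi(0,\cdot))=\{x\in\R_+^2:x_1+x_2\geq 4\}.
\end{equation*}
The principal face $\pi(\phi)$ is the compact edge from $(4,0)$ to $(0,4)$, passing through $\boldsymbol{d_\phi}=(2,2)$, which yields $d_{\phi,\xi}=2$.

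Next I would verify that $\{\xi_1,\xi_2\}$ is already adapted. The principal part is
\begin{equation*}
\phi_{pr}(\xi)=(\xi_1^2+\xi_2^2)^2,
\end{equation*}
which equals $1$ identically on $\mathbb{S}^1$; therefore $\mathrm{ord}\,\phi_{pr}(y)=0$ for every $y\in\mathbb{S}^1$, and $m(\phi_{pr})=0<2=d_{\phi,\xi}$. Theorem \ref{theorem-jugde adapted coordinate}(a) then grants adaptedness, so $ht_\phi=2$. Because $\pi(\phi)$ is a compact edge with $m(\phi_{pr})\neq d_{\phi,\xi}$, condition (b) of Theorem \ref{theorem-judge nu} fails in this adapted system, so no adapted coordinates can turn $\pi(\phi)$ into a vertex; this forces $\nu(\phi)=0$.

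Combining these ingredients, Theorem \ref{theorem-2 dim newton polyhedra} yields $|I(0,t)|\leq C(1+|t|)^{-1/2}$, and Theorem \ref{theorem-2dim NP uniform} upgrades this to the uniform bound $M(\phi(0,\cdot),(0,0))\curlyeqprec(-1/2,0)$. I do not anticipate any serious obstacle: even though $(0,0)$ is the unique point of $\Sigma_2$ at which the Hessian vanishes to full rank, the quartic principal part $(\xi_1^2+\xi_2^2)^2$ is so clean, and so strictly positive off the origin, that adaptedness and the absence of a logarithmic factor come essentially for free.
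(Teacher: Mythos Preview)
Your proof is correct and follows essentially the same route as the paper: expand $\phi(0,\xi)=(\xi_1^2+\xi_2^2)^2+N(\xi)$, observe that $m(\phi_{pr})=0<2=d_{\phi,\xi}$ so the original coordinates are adapted with $ht_\phi=2$, and invoke the Newton-polyhedron estimate together with Theorem~\ref{theorem-2dim NP uniform}. The only cosmetic difference is that the paper cites the general Varchenko result (Theorem~\ref{theorem-newton polyhedra}) after noting $\R$-nondegeneracy, whereas you use the two-dimensional Theorem~\ref{theorem-2 dim newton polyhedra} and determine $\nu(\phi)=0$ via Theorem~\ref{theorem-judge nu}; the paper's own remark right after the proof makes exactly your argument for $\nu(\phi)=0$, so the two are interchangeable.
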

\begin{proof}
    We expand $\phi = \omega^4$ directly at the origin to obtain 
    \begin{equation*}
        \phi(0,\xi) = \tilde{c}+(4-2\cos \xi_1-2\cos \xi_2)^2 = (\xi_1^2+\xi_2^2)^2 + N(\xi).
    \end{equation*}
    Since $\phi_{pr} = (\xi_1^2+\xi_2^2)^2$ and $m(\phi_{pr}) = 0 < 2 = d_{\phi,\xi}$, $\{\xi_1,\xi_2\}$ itself is adapted. Furthermore, $\phi$ is also $\R$-nondegenerate under this coordinate system. The conclusion follows easily from Theorem \ref{theorem-newton polyhedra} and \ref{theorem-2dim NP uniform}.
\end{proof}
\begin{remark}
    \noindent
    \begin{itemize}
        \item [(a)] It is a little harder to determine $\nu(\phi)$ in this case. In fact, $\phi_{pr} < d_{\phi,\xi}$ violates the second condition in Theorem \ref{theorem-judge nu}, implying $\nu(\phi) = 0$. For this special phase, we can also take polar coordinates to calculate the decay directly.
        \item [(b)] The case for $\gamma = -16$ is similar. When we expand the phase $\phi(0,\cdot)$ at $(\pi,\pi)$, we get the same principal part $\phi_{pr}(\xi) = (\xi_1^2+\xi_2^2)^2$.
    \end{itemize}
\end{remark}

Then, we combine Theorem \ref{theorem-xi = pi/2}, \ref{theorem-xi in E gamma}, \ref{theorem-xi is 0} and Lemma \ref{lemma-regular and nondegenerate estimate} to finish the proof.

\subsection{$\gamma = -8$}\label{ssec--8}

The trickiest part lies in this section, so we start the analysis from the very beginning. Recall that the determinant of the Hessian matrix is
\begin{equation*}
    \begin{aligned}
        \mathrm{det} (\mathrm{Hess} \phi)
        & = (4\omega^2-16)[(4\omega^2-16)c_1c_2+8s_1^2c_2+8s_2^2c_1] \\
        & = -64(c_1+c_2)^2(1-2c_1c_2).
    \end{aligned}
\end{equation*}
Inspired by this, we split the proof into three cases.

$\bullet$ $c_1^*+c_2^*=0$ and $|c_1^*|=1$

In this case, we assume $\xi_0 = (0,\pi)$ without loss of generality and expand the phase at this point to get
\begin{equation*}
    \begin{aligned}
        \phi(0,\xi) & = (4-2\cos \xi_1 -2\cos(\xi_2+\pi))^2 -8(4-2\cos \xi_1- 2\cos(\xi_2+\pi)) \\
        & = (-4-2\cos \xi_1 +2\cos \xi_2)(4-2\cos\xi_1 +2\cos \xi_2) \\
        & = \tilde{c} + (\xi_1^2-\xi_2^2)^2 + N(\xi).
    \end{aligned}
\end{equation*}
Similarly, this coordinate system is adapted (but not $\R$-nondegenerate). To investigate the Varchenko's exponent, we let $y_1 = \xi_1+\xi_2,\,y_2 = \xi_1-\xi_2$, then 
\begin{equation*}
    \phi(y) = \tilde{c} + y_1^2y_2^2 + N(y).
\end{equation*}
$\{y_1,y_2\}$ is also adapted and the principal face is a vertex $(2,2)$. Therefore, $\nu(\phi) = 1$ by Theorem \ref{theorem-judge nu}. In conclusion, $M(\phi,\xi_0) \curlyeqprec (-1/2,1)$.

$\bullet$ $c_1^* + c_2^* = 0$ and $|c_1^*| \neq 1$

This condition implies $s_1^* = s_2^*$ and $s_1^*s_2^* \neq 0$. We apply the same change of variables as Theorem \ref{theorem-xi in E gamma} to get
\begin{equation*}
    \phi(y) = \tilde{c} + a_2 y_2^2(1 + g_1(y_1))+y_2^3 g_2(y_1,y_2) = \tilde{c} + a_2 y_2^2 + N(y)
\end{equation*} 
where
\begin{equation*}
    2a_2 = -8({c_1^*}+c_2^*)^2+8(s_1^*+s_2^*)^2 = 8(s_1^*+s_2^*)^2 \neq 0.
\end{equation*}
Then the Newton polyhedron is given by
\begin{equation*}
    \mathcal{N}(\phi) = \{(0,2)+\R^2_+\}.
\end{equation*}
By Theorem \ref{theorem-jugde adapted coordinate}, $\{y_1,y_2\}$ is adapted, and $\nu(\phi)= 0$ by Theorem \ref{theorem-judge nu}. Hence $M(\phi,\xi_0) \curlyeqprec (-1/2,0)$.

$\bullet$ $c_1^*c_2^*=\frac{1}{2}$

This condition implies $c_1^*+c_2^* \neq 0$, $c_1^* \neq 0$, $c_2^* \neq 0$ and ${s_1^*}^2+{s_2^*}^2 \neq 0$. Thus we repeat the same proof as Theorem \ref{theorem-xi in E gamma} and get the same expansion. Now $a_2 \neq 0$ is trivial. To see that $a_1 \neq 0$, we calculate
\begin{equation*}
    \begin{aligned}
        \frac{3}{4}a_1
        & = \frac{(1-c_1^*c_2^*)(c_1^*+c_2^*)}{c_1^*c_2^*} \left[\left(\frac{1}{{c_1^*}^3}+\frac{1}{{c_2^*}^3}\right)+\frac{(c_1^*+c_2^*)(1-2c_1^*c_2^*)}{c_1^*c_2^*}\right] \\
        & = (c_1^*+c_2^*)\left(\frac{1}{{c_1^*}^3}+\frac{1}{{c_2^*}^3}\right).
    \end{aligned}
\end{equation*}
Since $c_1^* \neq -c_2^*$, we deduce $a_1 \neq 0$. Hence $M(\phi,\xi_0) \curlyeqprec (-5/6,0)$.

Above all, a decay of $\mathcal{O}(|t|^{-1/2}\log|t|)$ is proved.

\section{Strichartz estimates and nonlinear equations}\label{sec-Strichartz estimates}

Strichartz estimates for fourth-order Schr\"{o}dinger equation \eqref{equ-continuous fourth-order} on $\R^d$ was established by Pausader, see \cite[Proposition 3.1]{P07}. In this section, we will prove Theorem \ref{theorem-Strichartz estimate} by a classical result in the abstract setting. In what follows, we write $h_1\lesssim h_2$ if there exists $C>0$ independent of $t\in\R$ such that $h_1\leq C h_2$.

\begin{lemma}[Theorem 1.2 in \cite{KT98}]\label{lemma-tao's result}
    Let $(X,dx)$ be a measure space and $H$ be a Hilbert space. Suppose that for each time $t \in \R$ we have an operator $U(t):H\rightarrow L^2(X)$ which obeys the energy estimate
    \begin{equation*}
        \|U(t)f\|_{L^2_x} \lesssim \|f\|_H
    \end{equation*}
    and the truncated decay estimate for some $\sigma>0$
    \begin{equation*}
        \|U(t)(U(s))^*g\|_{L^{\infty}_x} \lesssim (1+|t-s|)^{-\sigma}\|g\|_{L^1_x},
    \end{equation*}
    where $(U(t))^*$ denotes the adjoint of $U(t)$. Then the estimates
    \begin{equation*}
        \|U(t)f\|_{L^{q}_t L^{r}_x} \lesssim \|f\|_H,
    \end{equation*}
    \begin{equation*}
        \left\|\int_{-\infty}^{t} U(t)(U(s))^*F(s)ds\right\|_{L^{q}_t L^{r}_x} \lesssim \|F\|_{L^{\overline{q}'}_t L^{\overline{r}'}_x}
    \end{equation*}
    hold for all $\sigma$-admissible exponent pairs $(q,r)$, $(\overline{q},\overline{r})$, i.e. $q,r \geqslant 2$, $(q,r,\sigma) \neq (2,\infty,1)$ and $\frac{1}{q} \leqslant \sigma (\frac{1}{2}-\frac{1}{r})$.
\end{lemma}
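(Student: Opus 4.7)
The plan is to follow the Keel--Tao strategy: convert both inequalities into a single bilinear bound via $TT^*$, decompose dyadically in the time separation $|t-s|$, interpolate on each piece between the energy and dispersive hypotheses, and finally sum using either Young's convolution inequality (off the scaling line) or a bilinear real-interpolation argument (at the endpoint).

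First, by the $TT^*$ method, the homogeneous estimate $\|U(t)f\|_{L^q_tL^r_x}\lesssim\|f\|_H$ is equivalent, after composing with its adjoint, to boundedness of the untruncated operator $TF(t):=\int_{\R}U(t)(U(s))^*F(s)\,ds$ from $L^{q'}_tL^{r'}_x$ to $L^{q}_tL^{r}_x$. I would therefore work with the bilinear form
\begin{equation*}
    B(F,G) := \iint_{\R\times\R}\langle U(t)(U(s))^*F(s),G(t)\rangle_{L^2_x}\,ds\,dt,
\end{equation*}
aiming to prove $|B(F,G)|\lesssim\|F\|_{L^{\overline{q}'}_tL^{\overline{r}'}_x}\|G\|_{L^{q'}_tL^{r'}_x}$ for all admissible pairs. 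The retarded inequality stated in the lemma follows from the unretarded bilinear bound via the Christ--Kiselev lemma off the endpoint, or by incorporating the indicator $s<t$ into the atomic construction that handles the endpoint.

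Next I would decompose $B=\sum_{j\in\Z}B_j$ according to $|t-s|\sim 2^j$. The energy hypothesis together with duality gives a $t,s$-uniform $L^2_x\to L^2_x$ bound for $U(t)(U(s))^*$, while the truncated decay hypothesis gives $L^1_x\to L^{\infty}_x$ with norm $\lesssim(1+2^j)^{-\sigma}$. Riesz--Thorin interpolation yields, for $r\geq 2$,
\begin{equation*}
    \|U(t)(U(s))^*g\|_{L^r_x}\lesssim (1+2^j)^{-\sigma(1-2/r)}\|g\|_{L^{r'}_x}.
\end{equation*}
Combining with H\"older in $x$ and Young's convolution inequality in $t$ produces, for symmetric pairs,
\begin{equation*}
    |B_j(F,G)|\lesssim (1+2^j)^{-\sigma(1-2/r)}\cdot 2^{j\cdot 2/q}\,\|F\|_{L^{q'}_tL^{r'}_x}\|G\|_{L^{q'}_tL^{r'}_x},
\end{equation*}
with analogous estimates when the two pairs differ. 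On the scaling line $1/q=\sigma(1/2-1/r)$ the exponent of $2^j$ vanishes exactly; strictly below it, the positive-$j$ tail is geometric and the negative-$j$ tail is controlled by the $L^2$-piece, so summation converges and the non-endpoint case is done. Bilinear interpolation between such off-diagonal bounds fills in the full mixed range for $(q,r)$ and $(\overline{q},\overline{r})$.

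The remaining obstacle, and the main technical hurdle of the whole argument, is the endpoint $q=\overline{q}=2$, $1/r=1/2-1/(2\sigma)$ with $\sigma>1$. There the dyadic pieces are only logarithmically summable and a naive summation fails. I would follow Keel--Tao by decomposing $F$ and $G$ into atoms associated to dyadic level sets of their spatial norms, and bounding $B_j$ on each atom pair via two complementary estimates built from the $L^2_x\to L^2_x$ bound and from the dispersive $L^1_x\to L^{\infty}_x$ bound, tuned so that the exponent of $2^j$ is strictly negative in one of them. Summing first in $j$ for each atom pair, and then in atoms by exploiting the Lorentz-space structure $L^{2,1}_t$, closes the endpoint estimate. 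The excluded triple $(q,r,\sigma)=(2,\infty,1)$ is precisely where this atomic scheme degenerates, as witnessed by Knapp-type counterexamples.
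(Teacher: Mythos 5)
The paper itself does not prove this lemma: it is reproduced verbatim as Theorem~1.2 of Keel--Tao \cite{KT98} and used as a black box, so there is no internal proof to compare your argument against; the appropriate ``proof'' in the paper's own terms is simply the citation. That said, your sketch is a recognizable and essentially faithful outline of the Keel--Tao strategy: $TT^{*}$ reduction to a bilinear form $B(F,G)$, dyadic decomposition in $|t-s|$, Riesz--Thorin interpolation on each piece between the energy bound ($L^2\to L^2$) and the truncated dispersive bound ($L^1\to L^{\infty}$), geometric summation below the scaling line, and a bilinear real-interpolation argument at the endpoint $q=\overline{q}=2$. Two imprecisions are worth flagging. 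First, for sharp admissible pairs with $q>2$ (equality $1/q=\sigma(1/2-1/r)$, not only the endpoint $q=2$), the naive dyadic sum via Young is also borderline divergent; one needs Hardy--Littlewood--Sobolev in $t$, or the very bilinear interpolation you invoke later for the mixed range, so the claim that ``the non-endpoint case is done'' by Young alone is an overstatement that applies only strictly below the scaling line. Second, Keel--Tao obtain the retarded estimate directly within their bilinear scheme (the $s<t$ cutoff is incorporated into the atomic construction), rather than via Christ--Kiselev; your Christ--Kiselev route is a legitimate alternative away from $\overline{q}'=q$, but it is a different device from the one in the cited reference. With these caveats, your outline is consistent with the proof in \cite{KT98}.
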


\begin{proof}[Proof of Theorem \ref{theorem-Strichartz estimate}]
    We will prove the conclusion for $\gamma=0$, the other cases are similar. Our proof relies on the representation of the solution, which is given by Duhamel's formula,
    \begin{equation}\label{equ-Duhamel formula}
        u(x,t) = e^{itL} f(x) - i\int_0^t e^{i(t-s)L} F(x,s)ds. 
    \end{equation}
    Recall that $e^{itL}$ has been defined in \eqref{equ-semigroup solution}. Let $H=l^2(\Z^2)$, $X=\Z^2$ and the truncated operators $U_{\pm}(t)=\chi _{[0,\infty]}(t) e^{\pm itL}$ for any fixed $t \in \R$ in Lemma \ref{lemma-tao's result}, where $\chi _{[0,\infty]}(t)=1$ if $t\geq 0$ and $\chi _{[0,\infty]}(t)=0$ if $t<0$. One easily checks that $U_{\pm}(t)$ satisfy the following properties:
    \begin{itemize}
        \item[(P1)] $U_{\pm}(t):l^2(\Z^2) \rightarrow l^2(\Z^2)$ and $|U_{\pm}(t)f|_2 \leqslant |f|_2$;
        \item[(P2)] $(U_{+}(t))^*=U_{-}(t)$;
        \item[(P3)] $e^{itL}e^{isL}f = e^{i(t+s)L}f$.
    \end{itemize}
    By (P2), (P3) and Theorem \ref{theorem-main conclusion}, we deduce that
    \begin{equation*}
        \begin{aligned}
            |U_{+}(t)(U_{+}(s))^*g|_{\infty} & =  |U_{+}(t)U_{-}(s)g|_{\infty} \leqslant |e^{itL}e^{-isL}g|_{\infty} = |G(\cdot,t-s)|_{\infty}|g|_1 \\
            & \lesssim (1+|t-s|)^{-\frac{1}{2}}|g|_1,
        \end{aligned}
    \end{equation*}
    which is the truncated decay estimate. The energy estimate can be obtained by the Plancherel identity, so Lemma \ref{lemma-tao's result} implies 
    \begin{equation}\label{equ-tao1}
    \|e^{itL} f\|_{L^q_t l^r} \lesssim |f|_{2}
    \end{equation} 
    and
    \begin{equation}\label{equ-tao2}
    \left\|\int_0^t e^{i(t-s)L}F(x,s)ds \right\|_{L^q_t l^r} \lesssim \|F\|_{L_t^{\overline{q}'} l^{\overline{r}'}}
    \end{equation}
    for $t,s \in \R$ as long as the index pairs $(q,r)$ and $(\overline{q},\overline{r})$ are $\frac{1}{2}$-admissible, i.e. $q,r \geqslant 2$ and $\frac{1}{q} \leq \frac{1}{2}\left(\frac{1}{2} - \frac{1}{r}\right)$. With (\ref{equ-Duhamel formula}), (\ref{equ-tao1}) and (\ref{equ-tao2}) in hands, we have
    \begin{equation*}
        \begin{aligned}
        \|u\|_{L^q_t l^r} 
        & \leqslant \| e^{itL} f\|_{L^q_t l^r}+\; \left\|\int_0^t e^{i(t-s)L}F(x,s)ds \right\|_{L^q_t l^r} \\
        & \leq C_{q,r,\tilde{q},\tilde{r}} \left(|f|_{2} + \left\|F\right\|_{L_t^{\overline{q}'} l^{\overline{r}'}}\right).
    \end{aligned}
    \end{equation*}
\end{proof}

\begin{remark}\label{remark-strichartz norm}
    We point out that the Strichartz norm can always be bounded by two space-time Lebesgue norms. In fact, for $\gamma=0$, we have
    \begin{equation*}
        ||h||_{\mathcal{S}} = \max\{||h||_{L_t^4 l^{\infty}},||h||_{L_t^{\infty} l^{2}}\}.
    \end{equation*}
    To make it clear, recall that $(q,r)$ is a Strichartz pair if it satisfies $\frac{1}{q} \leq \frac{1}{2}\left(\frac{1}{2}-\frac{1}{r}\right)$. For any such pair $(q,r)$, there exists $2 \leq r_0 \leq r$, such that $\frac{1}{q} = \frac{1}{2}\left(\frac{1}{2}-\frac{1}{r_0}\right)$. Consequently, there is $\theta \in [0,1]$ such that
    \begin{equation*}
        \left(\frac{1}{q},\frac{1}{r_0}\right)=\theta\left(\frac{1}{4},0\right)+(1-\theta)\left(0,\frac{1}{2}\right).
    \end{equation*}
    Then by the embedding $l^{r_0} \subset l^r$ and the log-convexity of $L^p$ norms, we get
    \begin{equation*}
        \begin{aligned}
            ||h||_{L_t^q l^{r}} & \leq ||h||_{L_t^q l^{r_0}} \leq ||h||_{L_t^4 l^{\infty}}^{\theta}||h||_{L_t^{\infty} l^{2}}^{1-\theta} \\
            & \leq \theta ||h||_{L_t^4 l^{\infty}}+ (1-\theta) ||h||_{L_t^{\infty} l^{2}} \\
            & \leq \max\{||h||_{L_t^4 l^{\infty}},||h||_{L_t^{\infty} l^{2}}\}.
        \end{aligned}  
    \end{equation*}
    The third inequality is valid due to the Young's inequality, $ab \leq \frac{1}{p}a^p+\frac{1}{q}b^q$ for $\frac{1}{p}+\frac{1}{q}=1$.
\end{remark}

As an application, we consider the discrete fourth-order Schr\"{o}dinger equation with power type nonlinearity, i.e. $F(u) = \pm |u|^{s-1}u$. In the following theorem we set
\begin{equation}\label{equ-nonlinear index}
    \begin{cases}
        s>5,& \text{if $\gamma=-8$;}\\
        s\geq 5,& \text{if $\gamma=0$ or $-16$;}\\
        s\geq \frac{11}{3},& \text{otherwise}.
    \end{cases}
\end{equation}

\begin{theorem}\label{theorem-nonlinear equation}
    Assume that $d=2$ and $s$ satisfies \eqref{equ-nonlinear index}. If $f$ has sufficiently small norm in $l^2$, i.e. $||f||_{l^2} \leq \varepsilon$ for some $\varepsilon$, then nonlinear equation \eqref{equ-origin fourthorder equation} with $F(u) = \pm |u|^{s-1}u$ has a unique global solution. Furthermore, the solution satisfies
    \begin{equation*}
        ||u||_{\mathcal{S}} \leqslant C \varepsilon.
    \end{equation*}
\end{theorem}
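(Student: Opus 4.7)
The plan is a standard Banach contraction argument carried out in the Strichartz space $\mathcal{S}$ introduced before Theorem \ref{theorem-Strichartz estimate}. By Duhamel's formula \eqref{equ-Duhamel formula}, a solution to \eqref{equ-origin fourthorder equation} is exactly a fixed point of
\begin{equation*}
\Phi(u)(x,t) := e^{itL}f(x) - i\int_0^t e^{i(t-s)L} F(u(x,s))\,ds,
\end{equation*}
so I would show that, for $R>0$ to be chosen, $\Phi$ is a strict contraction on the closed ball $B_R := \{u : \|u\|_{\mathcal{S}} \leq R\}$. Applying Theorem \ref{theorem-Strichartz estimate} with a fixed pair $(\overline{q},\overline{r})$ on the right and varying $(q,r)$ on the left, then taking the supremum over Strichartz pairs, gives
\begin{equation*}
\|\Phi(u)\|_{\mathcal{S}} \lesssim \|f\|_{l^2} + \|F(u)\|_{L^{\overline{q}'}_t l^{\overline{r}'}}.
\end{equation*}

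The central step is the nonlinear estimate. Since $|F(u)|=|u|^s$ pointwise,
\begin{equation*}
\|F(u)\|_{L^{\overline{q}'}_t l^{\overline{r}'}} = \|u\|_{L^{s\overline{q}'}_t l^{s\overline{r}'}}^{s}.
\end{equation*}
To dominate the right-hand side by $\|u\|_{\mathcal{S}}^s$, I would choose $(\overline{q},\overline{r})$ so that \textit{both} $(\overline{q},\overline{r})$ and $(s\overline{q}',s\overline{r}')$ satisfy the Strichartz condition \eqref{equ-Strichartz pair}. A direct linear programming check of the two admissibility inequalities in the $(1/\overline{q}',1/\overline{r}')$ plane produces the thresholds $s\geq 5$ in the case $\gamma\in\{0,-16\}$, $s>5$ in the endpoint case $\gamma=-8$ (forced by the strict inequality in \eqref{equ-Strichartz pair}), and $s\geq 11/3$ for the generic $\gamma$; this matches exactly the hypotheses in \eqref{equ-nonlinear index}. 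Concretely for $\gamma=0$ the pair $(\overline{q},\overline{r})=(4,\infty)$ and the companion $(s\overline{q}',s\overline{r}')=(4s/3,s)$ work for all $s\geq 5$. Combining with the previous display yields $\|\Phi(u)\|_{\mathcal{S}} \leq C(\|f\|_{l^2} + \|u\|_{\mathcal{S}}^s)$.

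For the difference estimate I would use the elementary pointwise bound $\bigl| |a|^{s-1}a - |b|^{s-1}b\bigr| \leq C_s(|a|^{s-1}+|b|^{s-1})|a-b|$ together with Hölder's inequality in space-time along the same exponent splitting, obtaining
\begin{equation*}
\|\Phi(u) - \Phi(v)\|_{\mathcal{S}} \leq C\bigl(\|u\|_{\mathcal{S}}^{s-1}+\|v\|_{\mathcal{S}}^{s-1}\bigr)\|u-v\|_{\mathcal{S}}.
\end{equation*}
Setting $R = 2C\varepsilon$ and choosing $\varepsilon$ small enough that $2CR^{s-1} \leq 1/2$, both $\Phi:B_R\to B_R$ and the contraction property hold; Banach's fixed-point theorem then produces a unique global $u\in B_R$ with $\|u\|_{\mathcal{S}}\leq 2C\varepsilon$, which is the claimed bound.

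The main obstacle is the bookkeeping in the nonlinear estimate: identifying the admissible region for the auxiliary pair $(\overline{q},\overline{r})$ so that the "dilated dual" $(s\overline{q}',s\overline{r}')$ also lies in the Strichartz polytope of \eqref{equ-Strichartz pair}. This is precisely where the geometry of \eqref{equ-Strichartz pair} (closed half-plane vs.\ open half-plane for $\gamma=-8$, and three different slopes according to $\gamma$) produces the three different thresholds in \eqref{equ-nonlinear index}. Everything else (the $\mathcal{S}$-boundedness of the linear flow, the contraction step, and the smallness argument) is routine once this exponent count is in place.
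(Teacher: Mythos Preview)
Your proposal is correct and follows the same contraction-mapping strategy as the paper. The only cosmetic difference is the choice of auxiliary Strichartz pair in the nonlinear estimate: the paper takes $(\overline{q},\overline{r})=(\infty,2)$, so that $(\overline{q}',\overline{r}')=(1,2)$ and the companion pair is $(s,2s)$, whereas you take $(\overline{q},\overline{r})=(4,\infty)$ with companion $(4s/3,s)$; both choices yield the identical threshold $s\geq 5$ for $\gamma=0$ and the argument is otherwise line-for-line the same.
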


\begin{proof}
    We still prove the conclusion for $\gamma=0$. To apply a standard contraction mapping argument, we consider the metric space
    \begin{equation*}
        \mathcal{M}=\left\{h(x,t):\Z^2 \times \R \rightarrow \C,\, ||h||_{\mathcal{M}}:=||h||_{\mathcal{S}} \leqslant 2C_0||f||_{l^2}\right\}
    \end{equation*}
    with $C_0=\max\{C_{4,\infty,\infty,2},C_{\infty,2,\infty,2}\}$, where $C_{q,r,\overline{q},\overline{r}}$ is the constant in Theorem \ref{theorem-Strichartz estimate}. We define the map $\Lambda$ on $\mathcal{M}$,
    \begin{equation}\label{equ-contraction map}
        \Lambda u(t) = e^{itL}f -i \int_{0}^{t}e^{i(t-\tau)L}F(u(\tau))\,d\tau.
    \end{equation}
    Note that a global solution to \eqref{equ-origin fourthorder equation} is a fixed point of $\Lambda$ on $\mathcal{M}$. By \eqref{equ-contraction map} and Theorem \ref{theorem-Strichartz estimate} (with $\overline{q}'=1,\overline{r}'=2$), we have, for $u \in \mathcal{M}$ and any Strichartz pair $(q,r)$,
    \begin{equation*}
        \begin{aligned}
            ||\Lambda u||_{L^q_t l^r} 
            & \leq \max\{||\Lambda u||_{L_t^4 l^{\infty}},||\Lambda u||_{L_t^{\infty} l^{2}}\} \\
            & \leq C_0 \left(|f|_2+||u||_{L^s_t l^{2s}}^s\right) 
             \leq C_0 |f|_2 + C_0 ||u||_{\mathcal{S}}^s \\
            & \leq C_0 |f|_2 + C_0 (2C_0|f|_{2})^s \leq C_0 |f|_2 + C_0 (2C_0\varepsilon)^s.
        \end{aligned}
    \end{equation*}
    In the third inequality we have used the fact that $(s,2s)$ is a Strichartz pair as long as $s \geq 5$. Thus, for appropriate $\varepsilon$, one has
    \begin{equation*}
        ||\Lambda u||_{\mathcal{S}} \leqslant 2 C_0 |f|_2,
    \end{equation*}
    that is, $\Lambda u \in \mathcal{M}$. Similarly, one can show that $\Lambda$ is a contraction with
    \begin{equation*}
            ||\Lambda u_1 - \Lambda u_2||_{L^q_t l^r}
             \leqslant C_s (||u_1||_{L^s_t l^{2s}}^{s-1}+||u_2||_{L^s_t l^{2s}}^{s-1})||u_1-u_2||_{L^s_t l^{2s}}. 
    \end{equation*}
    The estimate for the solution follows directly.
\end{proof}

\section*{Acknowledgement}

The author would like to express gratitude to Prof. Bobo Hua and Jiqiang Zheng for helpful suggestions, and Cheng Bi for discussions on this problem. 



\printbibliography

\end{document}